\documentclass[12pt]{amsart}
\usepackage{amsfonts}
\usepackage{amsfonts,latexsym,rawfonts,amsmath,amssymb,amsthm}
\usepackage[plainpages=false]{hyperref}
\usepackage{graphicx}

\numberwithin{equation}{section}

\RequirePackage{color}
 \textwidth = 6.2 in
 \textheight = 8.25 in
 \oddsidemargin = 0.25 in
 \evensidemargin = 0.25 in
\voffset=-20pt
\theoremstyle{plain}

\newtheorem{theorem}{Theorem}[section]

\newtheorem{conjecture}{Conjecture}[section]
\newtheorem{corollary}{Corollary}[section]
\newtheorem{definition}{Definition}[section]

\newtheorem{lemma}[theorem]{Lemma}
\newtheorem{proposition}[theorem]{Proposition}
\newtheorem{remark}[theorem]{Remark}

\newcommand{\beq}{\begin{equation}}
\newcommand{\eeq}{\end{equation}}
\newcommand{\beqs}{\begin{eqnarray*}}
\newcommand{\eeqs}{\end{eqnarray*}}
\newcommand{\beqn}{\begin{eqnarray}}
\newcommand{\eeqn}{\end{eqnarray}}
\newcommand{\beqa}{\begin{array}}
\newcommand{\eeqa}{\end{array}}

\def\phi{\varphi}

\begin{document}
\title[Prescribed Weingarten curvatures]{$k$-convex hypersurfaces with prescribed Weingarten curvature in warped product manifolds}

\author{Xiaojuan Chen}
\address{Faculty of Mathematics and Statistics, Hubei Key Laboratory of Applied Mathematics, Hubei University,  Wuhan 430062, P.R. China}
\email{201911110410741@stu.hubu.edu.cn}

\author{Qiang Tu$^{\ast}$}
\address{Faculty of Mathematics and Statistics, Hubei Key Laboratory of Applied Mathematics, Hubei University,  Wuhan 430062, P.R. China}
\email{qiangtu@hubu.edu.cn}

\author{Ni Xiang}
\address{Faculty of Mathematics and Statistics, Hubei Key Laboratory of Applied Mathematics, Hubei University,  Wuhan 430062, P.R. China}
\email{nixiang@hubu.edu.cn}

\keywords{Weingarten curvature; warped product manifolds; Hessian type
equation.}

\subjclass[2010]{Primary 53C45; Secondary 35J60.}

\thanks{This research was supported by funds from the National Natural Science Foundation of China No. 11971157, 12101206.}
\thanks{$\ast$ Corresponding author}

\begin{abstract}
In this paper, we consider  Weingarten curvature equations
for $k$-convex hypersurfaces with $n<2k$ in a warped product manifold  $\overline{M}=I\times_{\lambda}M$.
Based on the conjecture proposed by Ren-Wang in \cite{Ren2}, which is valid for $k\geq n-2$, we derive curvature estimates for equation $\sigma_k(\kappa)= \psi (V, \nu (V))$ through a straightforward proof.
Furthermore, we also obtain
an existence result for the star-shaped compact hypersurface $\Sigma$ satisfying the above equation by the degree theory under some sufficient conditions.
\end{abstract}

\maketitle

\baselineskip18pt

\parskip3pt

 \section{Introduction}

 Let $(M,g')$ be a compact Riemannian manifold and $I$ be an open interval in $\mathbb{R}$. The warped product manifold $\overline{M}=I\times_{\lambda}M$ is endowed with the metric
\begin{eqnarray}\label{metric}
\overline{g}^2=dr^2+\lambda^2(r) g',
\end{eqnarray}
where $\lambda:I\rightarrow\mathbb{R}^{+}$ is a positive $C^2$ differential function.
Let $\Sigma$ be a  compact star-shaped hypersurface in $\overline{M}$, thus $\Sigma$ can be parametrized as a radial graph over $\overline{M}$. Specifically speaking,
there exists a differentiable function
$r : M \rightarrow I$ such that the  graph of $\Sigma$ can be represented by
\begin{equation*}
  \Sigma=\{X(u)=(r(u),u)\mid u\in{M}\}.
\end{equation*}


In this paper, we consider the following prescribed Weingarten curvature equation in warped product
manifold $\overline{M}$
\begin{equation}\label{Eq}
\sigma_k(\kappa(V))=\psi(V, \nu(V)), \quad \forall~ V\in \Sigma,
\end{equation}
where $V=\lambda\frac{\partial}{\partial r}$ is the position vector field of hypersurface $\Sigma$ in $\overline{M}$, $\sigma_{k}$ is the $k$-th elementary symmetric function,
$\nu(V)$ is the outward unit normal vector field along the hypersurface
$\Sigma$ and $\kappa(V)=(\kappa_1,\cdots,\kappa_n)$ are the principle
curvatures of hypersurface $\Sigma$ at $V$.

 Curvature estimates for
 equation \eqref{Eq} in $\mathbb{R}^{n+1}$ has been studied extensively.  When $k=1$ and  $k=n$, the equation is quasi-linear equation and Gauss curvature equation respectively, then the corresponding curvature estimates follow from the classical theory of quasi-linear PDEs and Monge-Amp\`ere type equations in  \cite{CNS}.
When $\psi$ is independent of $\nu$, curvature estimates were proved by Caffarelli-Nirenberg-Spruck \cite{Ca}
for a general class of fully nonlinear operators $F$, including
$F=\sigma_k$ and $F=\frac{\sigma_k}{\sigma_l}$. When $\psi$ depends only on $\nu$, curvature estimates were proved by Guan-Guan \cite{Guan02}.
Curvature estimates were also proved for equation of prescribing curvature
measures problem in \cite{Guan12, Guan09}, where $\psi(X, \nu) = \langle X, \nu \rangle \Tilde{\psi}(X)$.
Ivochkina \cite{Iv1,Iv2} considered the Dirichlet problem of equation \eqref{Eq} and obtained curvatute estimates under some extra conditions on the dependence of $\psi$ on $\nu$.

In recent years, there are many progresses on establishing curvature estimates for equation \eqref{Eq} in case $2\leq k \leq n-1$.
 When $k=2$, curvature estimates for admissible solutions of equation \eqref{Eq} were obtained by Guan-Ren-Wang \cite{Guan-Ren15}. They also established curvature estimates of convex solutions for general $k$, see a simpler proof in Chu \cite{Chu21}. Subsequently, Spruck-Xiao \cite{Sp} extended $2$-convex case to space forms and gave a simple proof for the Euclidean case. In \cite{Ren, Ren1}, Ren-Wang proved curvature estimates for $k=n-1$ and $n-2$, respectively. They also proved curvature estimates for equation \eqref{Eq} with $n<2k$ in \cite{Ren2} based on a concavity conjecture.

Moreover, some results have
been obtained by Li-Oliker \cite{Li-Ol} on unit sphere, Barbosa-de
Lira-Oliker \cite{Ba-Li} on space forms, Jin-Li \cite{Jin} on
hyperbolic space, Andrade-Barbosa-de Lira \cite{An} on warped
product manifolds, Li-Sheng \cite{Li-Sh} for Riemannain manifold
equipped with a global normal Gaussian coordinate system. In particular, Chen-Li-Wang \cite{Chen} generalized the results in \cite{Guan-Ren15, Ren} to $(n-1)$-convex hypersurfaces in warped product manifolds.

Inspired by the above works, it is natural to consider extending Ren-Wang's results in \cite{Ren, Ren1, Ren2} from Euclidean space to  warped product manifolds.
Here we introduce the following conjecture:

\begin{conjecture}\label{key}
Let $\kappa=(\kappa_1,\cdots,\kappa_n)\in\Gamma_k$ with $\kappa_1\geq\kappa_2\geq\cdots\geq\kappa_n$ and $n<2k$. Assume that there exist constants $N_0, N_1$ such that $N_0\leq\sigma_k(\kappa)\leq N_1$. If there exist constants $K$ and $B$ such that $\kappa_1\geq B$, then
\begin{equation*}
  \kappa_1\left(K(\sum_j\sigma_k^{jj}(\kappa)\xi_j)^2-\sigma_k^{pp,qq}(\kappa)\xi_p\xi_q\right)-\sigma_k^{11}(\kappa)\xi_1^2+\sum_{j\neq1}a_j
\xi_j^2\geq0,
\end{equation*}
for any $\xi=(\xi_1,\cdots,\xi_n)\in \mathbb{R}^n$. Here $a_j=\sigma_k^{jj}(\kappa)+(\kappa_1+\kappa_j)\sigma_k^{11,jj}(\kappa)$.
\end{conjecture}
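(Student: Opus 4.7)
The plan is to prove the inequality by a direct quadratic-form analysis in $\xi$, extending the combinatorial strategy Ren-Wang developed for the boundary cases $k=n-1, n-2$. First I would expand the Hessian term via
$$\sigma_k^{ii,jj}(\kappa) = \sigma_{k-2}(\kappa\mid i,j)\ \text{ for } i\neq j, \qquad \sigma_k^{ii,ii} = 0,$$
together with $\sigma_k^{jj}=\sigma_{k-1}(\kappa \mid j)$, writing the whole left-hand side as a quadratic form $Q(\xi)$ whose coefficients are values of $\sigma_{k-1}$ and $\sigma_{k-2}$ on index subsets. The $a_j \xi_j^2$ correction then cancels the diagonal anomaly at $j\neq 1$ created by the $\kappa_1\sigma_k^{11,jj}$ cross-term, leaving a cleaner structure to analyze.

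Next I would carry out a spread-based splitting of the indices, as in \cite{Ren1,Ren2}: fix a small $\delta=\delta(n,k)>0$ and set
$$I=\{j\geq 2:\ \kappa_j\geq \delta\kappa_1\},\qquad J=\{2,\ldots,n\}\setminus I.$$
For $j\in J$, the ordered Newton-Maclaurin inequalities on $\Gamma_k$ together with $N_0 \leq \sigma_k(\kappa)\leq N_1$ and $\kappa_1$ bounded below force $\sigma_{k-2}(\kappa \mid 1,j)$ to be comparable to $\kappa_1^{k-2}$, so $a_j \geq C\kappa_1\,\sigma_{k-1}(\kappa \mid 1,j)$ with $C>0$ depending only on $n,k,\delta$. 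This strong positive block absorbs, via Cauchy-Schwarz, every cross term $-2\kappa_1\sigma_{k-2}(\kappa \mid p,j)\xi_p\xi_j$ with $j\in J$, including the $p=1$ contribution that must beat $\sigma_k^{11}\xi_1^2$. On the block $I\cup\{1\}$ all curvatures are mutually comparable to $\kappa_1$, so the residual quadratic form can be controlled by the constrained concavity of $\sigma_k^{1/k}$ restricted to these indices, with the auxiliary term $K\kappa_1(\sum_j \sigma_k^{jj}\xi_j)^2$ (for $K$ sufficiently large) providing the needed slack. The hypothesis $n<2k$ enters precisely here, ensuring that any two $k$-subsets of $\{1,\ldots,n\}$ must intersect and that the relevant $\sigma_{k-2}$-expressions on $I\cup\{1\}$ remain nondegenerate.

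The main obstacle I foresee is the uniform control of the $I$-$J$ cross terms. In the Ren-Wang cases $k=n-1$ and $k=n-2$ the set of ``small'' indices is forced to have size at most $1$ or $2$, so only a handful of cross terms arise and a direct Cauchy-Schwarz closes the estimate. For general $k$ with $n<2k$, $|J|$ may be large, and one must simultaneously handle many cross terms with a single choice of $K$ and $B$ that does not blow up as $\delta\to 0$ or as the partition $(I,J)$ degenerates. The crux, and what I would expect to spend most effort on, is finding a positivity-preserving reorganization of $Q(\xi)$ with quantitative constants depending only on $n,k,N_0,N_1,B$; this is precisely the step that has resisted a direct extension of the $(n-2)$-case argument to arbitrary $k$ with $n<2k$.
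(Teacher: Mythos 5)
This statement is an open \emph{conjecture}, not a theorem that the paper proves. The authors explicitly introduce it as ``the following conjecture,'' state in the remark after Theorem~\ref{Main} that it is \emph{weaker} than the concavity conjecture proposed by Ren--Wang in \cite{Ren, Ren1, Ren2}, and only note that it is known to hold when $k\geq n-2$ (by the results of Ren--Wang); the main theorem of the paper is proved \emph{conditionally} on Conjecture~\ref{key}. There is therefore no proof in the paper against which to compare your attempt, and your task cannot be answered as posed.

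Turning to the content of what you wrote: this is not a proof but an outline, and you candidly admit that the decisive step --- a positivity-preserving reorganization of $Q(\xi)$ that handles all the $I$--$J$ cross terms simultaneously with constants uniform in $\kappa$ --- is exactly the step that ``has resisted a direct extension.'' That admission is accurate; it is precisely why this remains a conjecture for $n-2 > k > n/2$ (the only cases not already settled). Several of the intermediate claims are also stated more optimistically than is justified: for example, from $N_0 \leq \sigma_k \leq N_1$ and $\kappa_1$ large one gets $\kappa_1\cdots\kappa_k$ comparable to a constant (so $\kappa_k$ can be as small as $O(\kappa_1^{-(k-1)})$), but it does \emph{not} follow in general that $\sigma_{k-2}(\kappa\mid 1,j)\sim \kappa_1^{k-2}$ for all $j\in J$, nor that $a_j$ enjoys a lower bound of the form $C\kappa_1\,\sigma_{k-1}(\kappa\mid 1,j)$ uniformly in $j\in J$; the signs and sizes of the $a_j$ are exactly the delicate point in the known $k=n-1,n-2$ arguments. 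Likewise, on the block $I\cup\{1\}$, ``constrained concavity of $\sigma_k^{1/k}$'' together with the $K\kappa_1(\sum_j\sigma_k^{jj}\xi_j)^2$ term does not automatically yield the needed positivity, because the concavity inequality for $\sigma_k^{1/k}$ controls $-\sigma_k^{pp,qq}\xi_p\xi_q$ only up to the term $\frac{k-1}{k}\frac{(\sum_j\sigma_k^{jj}\xi_j)^2}{\sigma_k}$, which is of the wrong order in $\kappa_1$ to be absorbed by $K\kappa_1(\cdot)^2$ without further structure. In short, the outline identifies the right framework and the right obstruction, but it does not resolve the conjecture, and the paper itself offers no proof to compare it with.
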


The main theorem is as follows.

\begin{theorem}\label{Main}
Let $r_1$, $r_2$ be constants with $r_1<r_2$,  $M$ be a compact Riemannian manifold, $\overline{M}$ be the warped product manifold
with the metric (\ref{metric}) and $\Gamma$ be an open neighborhood of unit normal bundle of $M$ in $\overline{M}\times \mathbb{S}^n$.  Assume that $\lambda$ is a positive $C^2$ differential function with $\lambda'>0$ and Conjecture \ref{key} holds. Suppose $\psi$ satisfies\par
\begin{eqnarray}\label{ASS1}
\psi(V,\nu)>C_n^k\zeta^k(r)\quad \quad \forall~ r\leq r_{1},
\end{eqnarray}
\begin{eqnarray}\label{ASS2}
\psi(V,\nu)<C_n^k\zeta^k(r)\quad \quad \forall ~r \geq r_{2}
\end{eqnarray}
and
\begin{eqnarray}\label{ASS3}
\frac{\partial}{\partial r}(\lambda^k\psi(V,\nu))\leq 0 \quad \quad \forall~ r_{1}<r<r_{2},
\end{eqnarray}
where $V=\lambda\frac{\partial}{\partial r}$ and $\zeta(r)=\lambda'(r)/\lambda(r)$.
Then there exists a $C^{4, \alpha}$, $k$-convex, star-shaped and closed hypersurface $\Sigma$ in the annulus domain $\{(r,u)\in \overline{M}\mid r_{1}\leq r \leq r_{2}\}$ that satisfies equation \eqref{Eq} for any $\alpha\in (0,1)$.
\end{theorem}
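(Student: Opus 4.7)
The plan is to reparametrize $\Sigma$ as a radial graph $r=\rho(u)$, $u\in M$, and rewrite \eqref{Eq} as a fully nonlinear second order elliptic equation $F(\nabla^2\rho,\nabla\rho,\rho)=0$ on $M$ in the class of $k$-admissible functions. Existence of a $C^{4,\alpha}$ solution will then follow from the degree theory for nonlinear elliptic equations once one has $C^{2,\alpha}$ a priori bounds uniformly along a homotopy. So the real content of the proof is the derivation of $C^0$, $C^1$ and curvature ($C^2$) estimates; Evans--Krylov together with Schauder will then supply the higher regularity, since $\sigma_k^{1/k}$ is concave on $\Gamma_k$.

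For the $C^0$ estimate I would test the equation at interior extrema of $\rho$. At an interior maximum $u_0$, $\nabla\rho(u_0)=0$ and $\nabla^2\rho(u_0)\le 0$; in warped product coordinates $\nu(u_0)=\partial_r$ and a direct computation shows $\kappa_i(u_0)\ge\zeta(\rho(u_0))$, so $\psi(V,\nu)(u_0)=\sigma_k(\kappa)(u_0)\ge C_n^k\zeta^k(\rho(u_0))$. Comparing with \eqref{ASS2} forces $\max\rho<r_2$; the symmetric argument at an interior minimum combined with \eqref{ASS1} gives $\min\rho>r_1$. The $C^1$ estimate is then standard for $k$-admissible star-shaped graphs: I would bound a support-type quantity such as $1/\langle V,\nu\rangle$ by the maximum principle, using the already-obtained $C^0$ bound and the monotonicity $\lambda'>0$, following the argument already carried out in the warped product setting in \cite{Chen}.

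The heart of the proof, and its main obstacle, is the curvature estimate. I would apply the maximum principle to an auxiliary function of the form $P=\log\kappa_1+\phi(\rho)+\beta\lvert X\rvert^2/2$, where $\kappa_1$ denotes the largest principal curvature and $\phi$, $\beta$ are to be chosen, following the scheme introduced by Ren--Wang in \cite{Ren2} and adapted to warped products in \cite{Chen}. Differentiating \eqref{Eq} twice and contracting with $\sigma_k^{ij}$ at a maximum point of $P$ produces, modulo commutator/curvature terms of $\overline{M}$ and lower-order quantities absorbed by the choice of $\phi$ and $\beta$, a differential inequality involving the third-order quantities $\sigma_k^{pp,qq}h_{pp1}h_{qq1}$ and $\bigl(\sum_p\sigma_k^{pp}h_{pp1}\bigr)^2$, together with a first-order term proportional to $\frac{\partial}{\partial r}(\lambda^k\psi)$. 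Hypothesis \eqref{ASS3} gives the correct sign to this first-order term, while Conjecture \ref{key} applied with $\xi_j=h_{jj1}$ absorbs the indefinite second-derivative terms into the favourable $-\sigma_k^{11}h_{111}^2$ contribution and the terms with coefficients $a_j$. Together these yield a uniform bound $\kappa_1\le C$; the hypothesis $n<2k$ is needed precisely so that Conjecture \ref{key} is available.

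With uniform bounds on the principal curvatures in hand, equation \eqref{Eq} becomes uniformly elliptic and concave on the set of admissible solutions, so Evans--Krylov followed by Schauder yields $C^{4,\alpha}$ a priori bounds. For the existence step I would deform $\psi$ through a smooth family $\psi_t$, $t\in[0,1]$, each satisfying \eqref{ASS1}--\eqref{ASS3} with uniform constants, connecting $\psi_1=\psi$ to a model $\psi_0$ whose admissible solution is a coordinate slice $\Sigma_0=\{r=r_*\}$ with $r_1<r_*<r_2$ (for instance any $\psi_0$ with $\lambda^k\psi_0$ constant and adjusted so that $\{r=r_*\}$ solves the equation). The uniform $C^{4,\alpha}$ estimates keep the degree of $F_t$ in an appropriate open set of admissible $C^{4,\alpha}$ functions well defined and invariant in $t$; at $t=0$ the degree is computed by analyzing the linearization at the slice, which is a self-adjoint elliptic operator that one checks to be invertible on the quotient by the trivial kernel, giving a nonzero value, and hence a solution at $t=1$.
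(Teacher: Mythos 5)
Your overall architecture matches the paper's: reduce to a priori $C^0$, $C^1$ and curvature estimates along a homotopy in $t$, then invoke Evans--Krylov, Schauder and Li's degree theory. The $C^0$ argument by testing at extrema of $\rho$ against \eqref{ASS1} and \eqref{ASS2} is exactly what the paper does, and your degree-theory endgame (deform to a model whose solution is a slice, check invertibility of the linearization) is essentially the paper's deformation $\widetilde\psi=t\psi+(1-t)\phi(r)C_n^k\zeta^k(r)$.

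Where you diverge from the paper is in how you allocate hypothesis \eqref{ASS3} and in the choice of test function. You claim that differentiating the equation twice in the curvature estimate produces a first-order term proportional to $\partial_r(\lambda^k\psi)$ to which \eqref{ASS3} gives the right sign. In the paper that hypothesis is not used in the $C^2$ estimate at all: Theorem \ref{n-2-C2e} is proved with the test function $Q=\log\kappa_1-A\tau+B\Lambda$ (support function $\tau=\langle V,\nu\rangle$ plus the warped potential $\Lambda=\int_0^r\lambda$), Chu's perturbation of $h_{ij}$, Conjecture \ref{key}, Lemma \ref{r2} and Lemma \ref{tu}, with no appeal to \eqref{ASS3}. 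Instead, \eqref{ASS3} is what makes the \emph{gradient} estimate work: it is the standing hypothesis of Theorem \ref{n-2-C1e}, which follows \cite{Chen}, where the monotonicity of $\lambda^k\psi$ in $r$ is the structural condition that lets one bound $1/\tau$ from above. Your summary of the $C^1$ step as ``standard from $\lambda'>0$'' glosses over precisely this: for a general $\psi(V,\nu)$ in a warped product the gradient bound is \emph{not} standard, and without invoking \eqref{ASS3} there the estimate would not close. Your alternative test function $\log\kappa_1+\phi(\rho)+\beta|X|^2/2$ is in the spirit of \cite{Ren2}, but in the warped setting the paper's choice with $\tau$ and $\Lambda$ (whose second derivatives are given cleanly by Lemma \ref{supp}) is what makes the good negative term $(A\tau-1)\sigma_k^{ii}h_{ii}^2$ appear; if you pursue your $P$ you would need to check that $\phi$ and $\beta$ can be chosen to produce an equivalent term, which you do not verify. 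These are the two places your write-up should be tightened; the rest of the plan is sound and parallels the paper's proof closely.
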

\begin{remark}
The key to prove Theorem \ref{Main} is to obtain curvature estimates (Theorem \ref{n-2-C2e}) for
this Hessian type equation in warped product manifold. Compared to the proof in Euclidean space by Ren-Wang \cite{Ren2}, we give a  straightforward proof. Note that Conjecture \ref{key} is weaker than the one proposed by Ren-Wang in \cite{Ren, Ren1, Ren2}.
\end{remark}
It is worth noting that Conjecture \ref{key} holds for $k\geq n-2$, which was proved in Ren-Wang \cite{Ren, Ren1, Ren2}. Thus we can directly get the following results.
\begin{corollary}
Let $k\geq n-2$.  $M$,  $\overline{M}$, $\Gamma$, $\lambda$ and $\psi$ are proposed in Theorem \ref{Main},
then there exists a $C^{4, \alpha}$, $k$-convex, star-shaped and closed hypersurface $\Sigma$ in $\{(r,u)\in \overline{M}\mid r_{1}\leq r \leq r_{2}\}$ that satisfies equation \eqref{Eq} for  $\alpha\in (0,1)$.
\end{corollary}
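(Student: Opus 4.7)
The plan is to derive the Corollary as an immediate consequence of Theorem \ref{Main}: once one verifies that Conjecture \ref{key} holds in the range $k\geq n-2$, the conclusion is nothing more than a restatement of Theorem \ref{Main} restricted to this range. The verification of the conjecture is exactly what Ren-Wang carried out in \cite{Ren, Ren1, Ren2}: the cases $k=n-1$ and $k=n-2$ are handled there by a delicate algebraic manipulation of $\sigma_k^{pp,qq}$, while any larger $k$ (if relevant) is easier since $\sigma_k^{1/k}$ becomes concave. Thus the entire substance of the Corollary is the substance of Theorem \ref{Main}, whose proof I now outline.

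Write $\Sigma$ as a radial graph $r:M\to I$ so that \eqref{Eq} reduces to a fully nonlinear Hessian-type PDE for $r$. I would use a degree-theoretic continuity method: embed \eqref{Eq} into a one-parameter family whose $t=0$ member admits an explicit coordinate-sphere solution with computable degree, and conclude existence at $t=1$ provided uniform $C^{4,\alpha}$ a priori estimates hold along the deformation. The $C^0$ bound $r_1\leq r\leq r_2$ is obtained by evaluating \eqref{Eq} at interior extrema of $r$, where the principal curvatures reduce to multiples of $\zeta(r)$ so that \eqref{Eq} reads $\binom{n}{k}\zeta^k(r)=\psi$; the sign conditions \eqref{ASS1} and \eqref{ASS2} then force $r_1\leq r\leq r_2$. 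The $C^1$ bound uses the star-shaped hypothesis, $\lambda'>0$, and a standard auxiliary function involving the support function $\langle V,\nu\rangle$.

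The heart of the proof is the $C^2$ (curvature) estimate, namely Theorem \ref{n-2-C2e}. I would consider an auxiliary function of the form $W=\log\kappa_1+\Phi(\langle V,\nu\rangle)+Ar$ at a maximum point $p_0$, diagonalize the second fundamental form at $p_0$, and compute the linearized operator $\sigma_k^{ij}\nabla_i\nabla_j W$ using the Codazzi and Gauss equations of the warped product $\overline{M}$. After using the monotonicity assumption \eqref{ASS3} to absorb the derivative of $\psi$ in the radial direction, and collecting the extra lower-order terms generated by the ambient warped geometry, the estimate reduces to proving non-negativity of a quadratic form in the third derivatives $h_{11;k}$. Applying Conjecture \ref{key} to this quadratic form converts the problematic indefinite part into a favorable sign and yields $\kappa_1\leq C$. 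Evans-Krylov and Schauder theory then upgrade this to $C^{4,\alpha}$, and the degree-theoretic step closes the existence argument.

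The main obstacle is precisely this curvature estimate. Compared with the Euclidean argument of Ren-Wang in \cite{Ren2}, the warped product setting generates additional first- and second-order terms from the variation of $\lambda(r)$ and from the intrinsic curvature of $M$. These terms must be absorbed using the structural conditions \eqref{ASS1}--\eqref{ASS3} together with a judicious choice of $\Phi$ and the coefficient $A$; the remark in the paper advertises a more streamlined proof than \cite{Ren2}, so fine-tuning $W$ so that Conjecture \ref{key} can be applied cleanly in a single step is where the technical subtlety lies.
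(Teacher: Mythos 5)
Your proposal is correct and follows exactly the paper's (one-line) argument: Conjecture \ref{key} is a theorem of Ren--Wang for $k\geq n-2$, so the Corollary is simply Theorem \ref{Main} restricted to that range. The extended sketch of Theorem \ref{Main} you append is broadly faithful to the paper's degree-theoretic scheme and $C^0$/$C^1$/$C^2$ estimates, with only a cosmetic difference in the test function for the curvature estimate (the paper uses $Q=\log\kappa_1-A\tau+B\Lambda$ with $\Lambda(r)=\int_0^r\lambda$ rather than $Ar$), and one loose remark (larger $k$ is not ``easier because $\sigma_k^{1/k}$ becomes concave''---that concavity holds for all $k$; the point is rather that Ren--Wang verified the conjecture specifically for $k=n-1$ and $k=n-2$, and $k=n$ is the classical Monge--Amp\`ere case).
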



The organization of the paper is as follows.
In Sect. 2
we start with some preliminaries.
$C^0$, $C^1$ and $C^2$ estimates are given in Sect. 3.
In Sect. 4 we prove theorem \ref{Main}.

After we completed our paper, we found that Wang independently proved the corresponding curvature estimates for $k=n-1, n-2$ in Theorem 4.1 of \cite{W24}. It also provides a new perspective to prove the global curvature estimates.

\section{Preliminaries}

\subsection{Star-shaped hypersurfaces in the warped product
manifold}

Let $M$ be a compact Riemannian manifold with the metric $g'$ and $I$ be an open interval in $\mathbb{R}$. Assuming $\lambda : I\rightarrow \mathbb{R}^{+}$ is a positive differential function and
$\lambda'>0$, the manifold $\overline{M}=I\times_{\lambda}M$ is called the warped product if it is endowed with the metric
\begin{eqnarray*}
\overline{g}^{2}=dr^2+\lambda^2(r) g'.
\end{eqnarray*}
The metric in $\overline{M}$ is denoted by $\langle\cdot,\cdot\rangle$. The corresponding Riemannian connection in $\overline{M}$ will be denoted by $\overline{\nabla}$. The usual connection in $M$ will be denoted by $\nabla'$. The curvature tensors in $M$ and $\overline{M}$ will be denoted by $R$ and $\overline{R}$, respectively.

Let $\{e_1,\cdots,e_{n-1}\}$ be an orthonormal frame field in M and let $\{\theta_1,\cdots,\theta_{n-1}\}$ be the associated dual frame.
The connection forms $\theta_{ij}$ and curvature forms $\Theta_{ij}$ in M satisfy the structural equations
  \begin{align}
    &d\theta_i=\sum_j\theta_{ij}\wedge\theta_j,\quad \theta_{ij}=-\theta_{ji} ,\\
    &d\theta_{ij}-\sum_k\theta_{ik}\wedge\theta_{kj}=\Theta_{ij}=-\frac{1}{2}\sum_{k,l}R_{ijkl}\theta_k
    \wedge\theta_l.
  \end{align}
An orthonormal frame in $\overline{M}$ may be defined by $\overline{e}_i=\frac{1}{\lambda}e_i,1\leq i\leq n-1$, and $\overline{e}_0=\frac{\partial}{\partial r}$. The associated dual frame is that $\overline{\theta}_i=\lambda\theta_i$ for $1\leq i \leq n-1$ and $\overline{\theta}_0=dr$.
Then we have the following lemma (See \cite{Hu20}).

\begin{lemma}
Given a differentiable function $r : M \rightarrow I$, its graph is defined by the hypersurface
\begin{eqnarray*}
\Sigma=\{(r(u),u): u \in M\}.
\end{eqnarray*}
Then the tangential vector takes the
form
$$X_i=\lambda\overline{e}_i+r_i \overline{e}_0,$$
where $r_i$ are the components of the differential $dr=r_i \theta^i$.
The induced metric on $\Sigma$ has
$$g_{ij}=\lambda^2(r)\delta_{ij}+r_i r_j,$$
and its inverse is given by
$$g^{ij}=\frac{1}{\lambda^2}(\delta_{ij}-\frac{r^i r^j}{v^2}).$$
We also have the outward unit normal vector of $\Sigma$
$$\nu=-\frac{1}{v}\bigg(\lambda \overline{e}_0 -r^i\overline{e}_i\bigg),$$
where $v=\sqrt{\lambda^2+|\nabla'r|^2}$ with $\nabla'r=r^ie_i$.
Let $h_{ij}$ be the second fundamental form of $\Sigma$ in term of the tangential vector fields $\{X_1,
..., X_n\}$. Then,
$$h_{ij}=-\langle\overline{\nabla}_{X_j} X_i, \nu\rangle=
\frac{1}{v}\bigg(-\lambda r_{ij}+2\lambda'r_ir_j+\lambda^2\lambda' \delta_{ij}\bigg)$$
and
$$h^i_j=\frac{1}{\lambda^2 v}(\delta_{ik}-\frac{r^i r^k}{v^2})\bigg(-\lambda r_{kj}+2\lambda'r_kr_j+\lambda^2\lambda' \delta_{kj}\bigg),$$
where $r_{ij}$ are the components of the Hessian $\nabla'^{2} r=\nabla' dr $ of $r$ in $M$.
\end{lemma}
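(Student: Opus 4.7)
The plan is to establish each formula by direct calculation in the adapted orthonormal frame $\{\overline{e}_0 = \partial_r,\ \overline{e}_i = e_i/\lambda\}$ on the warped product, combined with the standard Koszul relations for $\overline{\nabla}$ on $I \times_\lambda M$. No conceptual ingredient is required beyond the graph parametrization and these relations; the work is bookkeeping.

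First, I would compute the tangent vector $X_i$ by pushing forward the frame vector $e_i$ on $M$ under the map $u \mapsto (r(u), u)$. The chain rule, together with $e_i = \lambda \overline{e}_i$, immediately gives $X_i = \lambda \overline{e}_i + r_i \overline{e}_0$. From this, orthonormality of $\{\overline{e}_\alpha\}$ yields $g_{ij} = \langle X_i, X_j \rangle = \lambda^2 \delta_{ij} + r_i r_j$, and the formula for $g^{ij}$ is verified by contracting $g^{ik} g_{kj}$ using $r^k r_k = |\nabla' r|^2 = v^2 - \lambda^2$. For the outward unit normal I would adopt the ansatz $\nu = a \overline{e}_0 + b^i \overline{e}_i$; orthogonality $\langle \nu, X_i \rangle = 0$ forces $b^i = -(a/\lambda)\, r^i$, normalization fixes $|a| = \lambda/v$, and the outward-pointing convention (consistent with $\lambda' > 0$) selects the stated sign.

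The main obstacle is the second fundamental form $h_{ij} = -\langle \overline{\nabla}_{X_j} X_i, \nu \rangle$. For this step I would first record the warped-product Koszul relations in the adapted frame,
\begin{equation*}
\overline{\nabla}_{\overline{e}_0}\overline{e}_0 = 0,\quad \overline{\nabla}_{\overline{e}_0}\overline{e}_i = 0,\quad \overline{\nabla}_{\overline{e}_i}\overline{e}_0 = \zeta\, \overline{e}_i,\quad \overline{\nabla}_{\overline{e}_i}\overline{e}_j = \frac{1}{\lambda^2}\nabla'_{e_i} e_j - \zeta\, \delta_{ij}\, \overline{e}_0,
\end{equation*}
with $\zeta = \lambda'/\lambda$. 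Then expanding $\overline{\nabla}_{X_j} X_i$ by bilinearity and Leibniz—treating $\lambda$ as a function of $r$ alone and $r_i$ as a function of $u$ alone, so that $\overline{e}_j(\lambda) = 0$ and $\overline{e}_0(r_i) = 0$—produces a sum with components along $\overline{e}_0$, $\overline{e}_k$, and the intrinsic piece $\nabla'_{e_j} e_i$. Using the identity $r_{ij} = e_j(r_i) - \Gamma^k_{ji} r_k$ for the Hessian on $M$ to trade $e_j(r_i)$ for $r_{ij}$, the Christoffel contributions in the $\overline{e}_0$ and $\overline{e}_k$ components cancel after pairing with $\nu$, since in the orthonormal frame one has $r^k = r_k$. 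What remains assembles into exactly $(-\lambda r_{ij} + 2\lambda' r_i r_j + \lambda^2 \lambda' \delta_{ij})/v$.

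Finally, $h^i_j = g^{ik} h_{kj}$ is a one-line index raising using the explicit formula for $g^{ik}$. I expect the only genuine difficulty to be tracking signs and verifying the Christoffel cancellation that identifies the Hessian; once the four Koszul identities above are recorded, every remaining manipulation is mechanical.
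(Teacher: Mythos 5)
Your overall strategy is sound and is in fact the standard derivation: the paper itself does not prove this lemma (it is quoted from the reference \cite{Hu20}), so a self-contained check like yours — pushforward of $e_i$ to get $X_i=\lambda\overline{e}_i+r_i\overline{e}_0$, the induced metric and its inverse by direct contraction, the normal by the ansatz-plus-orthogonality argument, and the second fundamental form from the warped-product connection identities — is exactly what a proof would look like. Your four Koszul relations are correct (I verified $\overline{\nabla}_{\overline{e}_0}\overline{e}_i=0$, $\overline{\nabla}_{\overline{e}_i}\overline{e}_0=\zeta\overline{e}_i$, $\overline{\nabla}_{\overline{e}_i}\overline{e}_j=\lambda^{-2}\nabla'_{e_i}e_j-\zeta\delta_{ij}\overline{e}_0$), the extension of $r_i$ as an $r$-independent ambient function is legitimate, and the mechanism you describe — the $\nabla'_{e_j}e_i$ term paired with the tangential part of $\nu$ supplying exactly the $-\Gamma^k_{ji}r_k$ needed to covariantize $e_j(r_i)$ into $r_{ij}$ — is the right one.

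The one concrete point you get wrong is the sign bookkeeping at the very end, and it is worth flagging because you assert the computation lands \emph{exactly} on the stated $h_{ij}$ with the stated $\nu$. The vector $\nu=-\frac{1}{v}(\lambda\overline{e}_0-r^i\overline{e}_i)$ has $\langle\nu,\partial_r\rangle=-\lambda/v<0$, so it is the \emph{inward} normal, not the outward one; your claim that the outward-pointing convention ``selects the stated sign'' is backwards. Carrying out your own computation literally with this $\nu$ and the convention $h_{ij}=-\langle\overline{\nabla}_{X_j}X_i,\nu\rangle$ yields $\frac{1}{v}(\lambda r_{ij}-2\lambda'r_ir_j-\lambda^2\lambda'\delta_{ij})$, i.e.\ the negative of the displayed formula. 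A quick sanity check with $\overline M=\mathbb{R}^{n+1}$, $\lambda(r)=r$, and the slice $r\equiv R$ shows the displayed $h_{ij}=\lambda^2\lambda'\delta_{ij}/v=R\,\delta_{ij}$ (positive principal curvatures, as required by $k$-convexity and the use of $\sigma_k=C_n^k\zeta^k$ later in the paper), which is consistent with $h_{ij}=-\langle\overline{\nabla}_{X_j}X_i,\nu\rangle$ only if $\nu=+\frac{1}{v}(\lambda\overline{e}_0-r^i\overline{e}_i)$. So the lemma as printed carries a sign typo in $\nu$ (or equivalently in the sign convention for $h_{ij}$), and your write-up should either correct the sign of $\nu$ or the convention before claiming the final assembly; as written, your last step would not close. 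Everything else in the proposal is correct and routine.
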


The Codazzi equation is a commutation formula for the first order derivative of $h_{ij}$ given by
\begin{equation}\label{Ce}
  h_{ijk}-h_{ikj}=\overline{R}_{0ijk}
\end{equation}
and the Ricci identity is a commutation formula for the second order derivative of $h_{ij}$ given by

\begin{lemma}
   Let $\overline{X}$ be a point of $\Sigma$ and $\{E_{0}=\nu,E_1,\cdots,E_n\}$ be an adapted frame field such that each $E_i$ is a principal direction and  $\omega_i^k=0$ at $\overline{X}$. Let $(h_{ij})$ be the second quadratic form of $\Sigma$. Then, at the point $\overline{X}$, we have
   \begin{equation}
     h_{ii11}-h_{11ii}=h_{11}h_{ii}^2-h_{11}^2h_{ii}+2(h_{ii}-h_{11})\overline{R}_{i1i1}+h_{11}\overline{R}
     _{i0i0}-h_{ii}\overline{R}_{1010}+\overline{R}_{i1i0;1}-\overline{R}_{1i10;i}.
   \end{equation}
\end{lemma}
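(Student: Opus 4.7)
The plan is to derive the identity from three classical ingredients: the Codazzi equation \eqref{Ce}, the Ricci identity for commuting intrinsic covariant derivatives on $\Sigma$, and the Gauss equation relating intrinsic and ambient curvature. Throughout I work at $\overline{X}$ in the given adapted principal frame, where $h_{ij}=\kappa_i\delta_{ij}$ and the intrinsic Christoffel symbols vanish, so at $\overline{X}$ the intrinsic derivative of a scalar function on $\Sigma$ coincides with the directional derivative.

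First I would apply Codazzi in two parallel forms,
\[
h_{ii1}=h_{i1i}+\overline{R}_{0ii1},\qquad h_{11i}=h_{1i1}+\overline{R}_{011i},
\]
and differentiate once more, obtaining $h_{ii11}=h_{i1i1}+\nabla_1\overline{R}_{0ii1}$ and $h_{11ii}=h_{1i1i}+\nabla_i\overline{R}_{011i}$. The delicate point is that the intrinsic derivative of the restriction $p\mapsto\overline{R}(\nu,e_a,e_b,e_c)(p)$ is not simply the ambient covariant derivative $\overline{R}_{0abc;l}$: using the Gauss--Weingarten formulas $\overline{\nabla}_le_a=\nabla_le_a+h_{la}\nu$ and $\overline{\nabla}_l\nu=\pm h_l{}^me_m$ (whose sign is fixed by the paper's convention $h_{ij}=-\langle\overline{\nabla}_{X_j}X_i,\nu\rangle$) together with the Leibniz rule for $\overline{\nabla}$, one finds
\[
\nabla_l\overline{R}_{0abc}=\overline{R}_{0abc;l}+(\text{corrections linear in }h\cdot\overline{R}).
\]
Specializing to the indices at hand in the diagonal frame and invoking the antisymmetries of $\overline{R}$, these corrections yield the $h_{11}\overline{R}_{i1i1}$, $h_{11}\overline{R}_{i0i0}$ and the analogous $h_{ii}$ contributions appearing on the right-hand side of the target identity.

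Next I would commute the order of the two derivatives in the mixed term $h_{i1i1}$ via the Ricci identity: at $\overline{X}$ one has
\[
h_{i1i1}-h_{i11i}=(\kappa_i-\kappa_1)R_{i1i1},
\]
where $R_{i1i1}$ is the intrinsic sectional curvature of $\Sigma$. The Gauss equation $R_{i1i1}=\overline{R}_{i1i1}+h_{11}h_{ii}-h_{i1}^2$ converts this to the ambient $\overline{R}_{i1i1}$ plus $\kappa_1\kappa_i$, producing both the cubic term $h_{11}h_{ii}^2-h_{11}^2h_{ii}$ and an extra $(h_{ii}-h_{11})\overline{R}_{i1i1}$. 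A third application of Codazzi, $h_{1i1}=h_{11i}+\overline{R}_{01i1}$ followed by differentiation in $e_i$, lets me identify $h_{i11i}$ with $h_{11ii}$ up to one more curvature-derivative correction. Collecting all pieces, the three $\overline{R}_{i1i1}$ contributions (one from Ricci, two from the Codazzi differentiations) combine into $2(h_{ii}-h_{11})\overline{R}_{i1i1}$, and the remaining ambient curvature-derivative terms can be rewritten as $\overline{R}_{i1i0;1}-\overline{R}_{1i10;i}$ by the pair-symmetry and antisymmetries of $\overline{R}$.

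The hard part is sign- and convention-bookkeeping. Three separate sign choices interact here: the minus sign in the paper's definition $h_{ij}=-\langle\overline{\nabla}_{X_j}X_i,\nu\rangle$, the corresponding Gauss--Weingarten sign it forces, and the convention chosen for the Riemann tensor and the Ricci commutator. It is essential to distinguish carefully between the intrinsic covariant derivative of the scalar function $p\mapsto\overline{R}_{0abc}(p)$ and the ambient tensorial object $\overline{R}_{0abc;l}$, tracking each Weingarten correction. Once these signs are reconciled consistently, the remaining assembly is mechanical and the identity emerges termwise.
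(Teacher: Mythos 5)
The paper does not spell out a proof here—it simply cites Chen--Li--Wang \cite{Chen}, Lemma 2.2—and your sketch is precisely the standard Gauss--Codazzi--Ricci derivation carried out there (and in Andrade--Barbosa--de Lira), so the approach matches. Your accounting is correct: the Weingarten corrections from differentiating the restricted ambient curvature produce the $h_{11}\overline{R}_{i0i0}$, $h_{ii}\overline{R}_{1010}$ terms and two $\overline{R}_{i1i1}$ pieces, while the intrinsic Ricci commutation plus Gauss supplies the cubic terms and the third $\overline{R}_{i1i1}$ piece, which together give $2(h_{ii}-h_{11})\overline{R}_{i1i1}$. One minor simplification: the ``third application of Codazzi'' is redundant—once you have $h_{ii11}=h_{i1i1}+\nabla_1\overline{R}_{0ii1}$ and $h_{11ii}=h_{1i1i}+\nabla_i\overline{R}_{011i}$, the symmetry $h_{i1}=h_{1i}$ already identifies $h_{i11i}$ with $h_{1i1i}$, so the Ricci commutation of $h_{i1i1}$ and $h_{1i1i}$ closes the computation.
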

\begin{proof}
See \cite[Lemma 2.2]{Chen}.
\end{proof}

Consider the function
\begin{eqnarray*}
\tau=\langle V, \nu\rangle, \quad \quad  \Lambda(r)=\int_{0}^{r}\lambda(s) d s
\end{eqnarray*}
with the position vector field
\begin{eqnarray*}
V=\lambda(r)\frac{\partial}{\partial_r}.
\end{eqnarray*}
 Then we need the following lemma for $\tau$ and  $\Lambda$.
\begin{lemma}\label{supp}
\begin{eqnarray}\label{1d-lad}
\nabla_{E_i} \Lambda =\lambda \langle \overline{e}_0, E_i\rangle E_i,
\end{eqnarray}
\begin{eqnarray}\label{1d-tau}
\nabla_{E_i} \tau = \sum_j \nabla_{E_j} \Lambda h_{ij},
\end{eqnarray}
\begin{eqnarray}\label{2d-lad}
\nabla^2_{E_i, E_j} \Lambda=\lambda^{\prime}g_{ij}-\tau h_{ij}
\end{eqnarray}
and
\begin{eqnarray}\label{2d-tau}
\nabla^2_{E_i, E_j} \tau=-\tau \sum_k h_{ik}h_{kj}+ \lambda^{\prime}h_{ij}+\sum_k \left( h_{ijk}-\overline{R}_{0ijk}\right) \nabla_{E_k} \Lambda.
\end{eqnarray}
\end{lemma}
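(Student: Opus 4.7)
The plan is to derive all four identities from two simple ambient facts: $\overline{\nabla}\Lambda = V$ on $\overline{M}$ (since $\Lambda'(r)=\lambda$ and $\overline{\nabla} r=\overline{e}_0$), and the warped-product identity $\overline{\nabla}_X V = \lambda' X$ for every $X\in T\overline{M}$, which follows in one line from $\overline{\nabla}_{\overline{e}_0}\overline{e}_0=0$ together with the standard relation $\overline{\nabla}_X\overline{e}_0=(\lambda'/\lambda)X$ for horizontal $X$. Formula \eqref{1d-lad} is then immediate by tangential projection: $\nabla_{E_i}\Lambda = \langle\overline{\nabla}\Lambda, E_i\rangle = \lambda\langle\overline{e}_0, E_i\rangle$ (the trailing ``$E_i$'' in the displayed statement being an evident typographical artefact, since the left-hand side is a scalar).

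For \eqref{1d-tau} I would differentiate $\tau=\langle V,\nu\rangle$ directly. The Weingarten identity $\overline{\nabla}_{E_i}\nu=\sum_k h_{ik}E_k$ (consistent with the convention $h_{ij}=-\langle\overline{\nabla}_{X_j}X_i,\nu\rangle$ of Lemma 2.1), together with $\overline{\nabla}_{E_i}V=\lambda' E_i\perp\nu$, gives
\begin{eqnarray*}
\nabla_{E_i}\tau \;=\; \langle\overline{\nabla}_{E_i}V,\nu\rangle+\langle V,\overline{\nabla}_{E_i}\nu\rangle \;=\; \sum_k h_{ik}\langle V,E_k\rangle \;=\; \sum_k h_{ik}\,\nabla_{E_k}\Lambda.
\end{eqnarray*}

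For the two Hessian identities I would invoke the Gauss formula $\overline{\nabla}_{E_i}E_j=\nabla_{E_i}E_j-h_{ij}\nu$, which implies that for any smooth $f$ on $\overline{M}$ one has $\nabla^2_{E_i,E_j}f = \overline{\nabla}^2 f(E_i,E_j)-h_{ij}\,\nu(f)$. Applied to $f=\Lambda$, with $\overline{\nabla}^2\Lambda(X,Y)=\langle\overline{\nabla}_X V,Y\rangle=\lambda' g(X,Y)$ and $\nu(\Lambda)=\langle V,\nu\rangle=\tau$, this yields \eqref{2d-lad} at once. For \eqref{2d-tau}, I would then differentiate \eqref{1d-tau} at a fixed point $p\in\Sigma$ in a frame arranged so that the intrinsic Christoffel symbols vanish there; the product rule produces $\sum_k h_{ikj}\nabla_{E_k}\Lambda + \sum_k h_{ik}\nabla^2_{E_j,E_k}\Lambda$. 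Substituting \eqref{2d-lad} into the second sum extracts $\lambda' h_{ij}-\tau\sum_k h_{ik}h_{kj}$, and rewriting $h_{ikj}=h_{ijk}-\overline{R}_{0ijk}$ via the Codazzi identity \eqref{Ce} (using the antisymmetry $\overline{R}_{0ikj}=-\overline{R}_{0ijk}$ in the last two indices) delivers the stated right-hand side of \eqref{2d-tau}.

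No step here is deep; the main bookkeeping hazard is keeping the signs straight, in particular the minus sign appearing in the Gauss formula that stems from the paper's convention for $h_{ij}$, and the precise index ordering in the Codazzi contribution. Once those conventions are pinned down the entire lemma reduces to a short algebraic manipulation.
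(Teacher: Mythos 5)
Your derivation is correct and is essentially the standard one found in the sources the paper cites for this lemma (the paper itself gives no proof, only pointing to Guan--Li, Jin--Li and Chen--Li--Wang). The two ambient facts you isolate, $\overline{\nabla}\Lambda=V$ and $\overline{\nabla}_X V=\lambda' X$, together with the Gauss and Weingarten relations under the paper's sign convention $h_{ij}=-\langle\overline{\nabla}_{X_j}X_i,\nu\rangle$ (so $\overline{\nabla}_{E_i}\nu=\sum_k h_{ik}E_k$ and $\overline{\nabla}_{E_i}E_j=\nabla_{E_i}E_j-h_{ij}\nu$), do yield exactly \eqref{1d-lad}--\eqref{2d-tau}; in particular the Hessian transfer formula $\nabla^2_{E_i,E_j}f=\overline{\nabla}^2 f(E_i,E_j)-h_{ij}\,\nu(f)$ and the substitution $h_{ikj}=h_{ijk}-\overline{R}_{0ijk}$ from Codazzi \eqref{Ce} reproduce \eqref{2d-lad} and \eqref{2d-tau} verbatim. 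Your observation that the trailing $E_i$ in \eqref{1d-lad} is a typographical slip (the left side is scalar, so the intended identity is $\nabla_{E_i}\Lambda=\lambda\langle\overline{e}_0,E_i\rangle=\langle V,E_i\rangle$, equivalently $\nabla\Lambda=V^{T}$) is also correct.
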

\begin{proof}
See Lemma 2.2, Lemma 2.6 and Lemma 2.3 in \cite{Guan15}, \cite{Jin} or \cite{Chen} for the proof.
\end{proof}

\subsection{$k$-th elementary symmetric functions}
Let $\kappa=(\kappa_1,\dots,\kappa_n)\in\mathbb{R}^n$, then we recall the
 definition of elementary symmetric function for $1\leq k\leq n$
\begin{equation*}
\sigma_k(\kappa)= \sum _{1 \le i_1 < i_2 <\cdots<i_k\leq
n}\kappa_{i_1}\kappa_{i_2}\cdots\kappa_{i_k}.
\end{equation*}

\begin{definition}
 A $C^2$ regular hypersurface $M\subset\mathbb{R}^{n+1}$ is called $k$-convex if its principal curvature vector $\kappa(X)\in\overline{\Gamma}_k$ for all $X\in M$. For a domain $\Omega\subset\mathbb{R}^n$, a function $u\in C^2(\Omega)$ is called admissible if its graph is $k$-convex. Here $\Gamma_k$ is the G{\aa}rding's cone
$$\Gamma_k  = \{ \kappa  \in \mathbb{R}^n :\sigma _m (\kappa ) >
0,\quad m=1,\cdots,k\}.$$
\end{definition}

Denote $\sigma_{k-1}(\kappa|i)=\frac{\partial
\sigma_k}{\partial \kappa_i}$ and
$\sigma_{k-2}(\kappa|ij)=\frac{\partial^2 \sigma_k}{\partial
\kappa_i\partial \kappa_j}$, then we list some properties of
$\sigma_k$ which will be used later.

\begin{lemma}\label{n-2-pre-lem1}
If $\kappa\in\Gamma_k$ and $\kappa_1\geq\cdots\geq\kappa_k\geq\cdots\geq\kappa_n$, then we have

(a) For any $1\leq l<k$, we have
\[ \sigma_l(\kappa)\geq\kappa_1\kappa_2\cdots\kappa_l,\]

(b)
\[\sigma_k(\kappa)\leq C_n^k\kappa_1\cdots\kappa_k,\]

(c)
\[\sigma_{k-1}(\kappa|k)\geq C(n,k)\sigma_{k-1}(\kappa),
\]

(d)
\[-\kappa_i<\frac{(n-k)\kappa_1}{k},\]
if $\kappa_i\leq0$, $1\leq i\leq n$,

(e)
\[\sum_i\sigma_{k-1}(\kappa|i)\kappa_i^2\geq\frac{k}{n}\sigma_1(\kappa)\sigma_k(\kappa).\]
\end{lemma}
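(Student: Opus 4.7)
The plan is to prove (a)--(e) using the two basic identities
\[
\sigma_k(\kappa)=\sigma_k(\kappa|i)+\kappa_i\sigma_{k-1}(\kappa|i),\qquad \sum_i\kappa_i\sigma_{k-1}(\kappa|i)=k\sigma_k(\kappa),
\]
the Newton--MacLaurin inequalities, and the standard positivity properties of the G\aa rding cone (notably $(\kappa|i)\in\Gamma_{k-1}$ when $\kappa\in\Gamma_k$, and $\partial_j\sigma_l=\sigma_{l-1}(\kappa|j)>0$ on $\Gamma_l$).

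For (a) I would induct on $l$: the base case $l=1$ reduces to $\sigma_1(\kappa|1)\geq 0$, which holds since $(\kappa|1)\in\Gamma_{k-1}\subset\Gamma_1$; the inductive step uses $\sigma_l(\kappa)=\sigma_l(\kappa|1)+\kappa_1\sigma_{l-1}(\kappa|1)$ together with the inductive hypothesis applied to $(\kappa|1)\in\Gamma_{k-1}$. For (b) I would estimate each of the $C_n^k$ monomials in $\sigma_k(\kappa)$: monomials containing an odd number of negative factors are non-positive and hence trivially bounded by $\kappa_1\cdots\kappa_k$, while monomials with an even number of negative factors are controlled by pairing the negative factors and invoking (d). For (c) I would split by the sign of $\kappa_k$: when $\kappa_k\leq 0$, the identity $\sigma_{k-1}(\kappa|k)=\sigma_{k-1}(\kappa)+(-\kappa_k)\sigma_{k-2}(\kappa|k)$ gives the bound directly with $C(n,k)=1$; when $\kappa_k>0$, combine the lower bound $\sigma_{k-1}(\kappa|k)\geq\kappa_1\cdots\kappa_{k-1}$ (from the distinguished monomial) with the upper bound $\sigma_{k-1}(\kappa)\leq C_n^{k-1}\kappa_1\cdots\kappa_{k-1}$ coming from (b) at level $k-1$.

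The main obstacle is (d), which I would prove by contradiction. Supposing $\kappa_i\leq 0$ with $-\kappa_i\geq\frac{n-k}{k}\kappa_1$, define $\tilde\kappa$ by $\tilde\kappa_i=\kappa_i$ and $\tilde\kappa_j=\kappa_1$ for $j\neq i$. Along the straight path $\kappa(t)=(1-t)\kappa+t\tilde\kappa$, each $\sigma_l$ with $l\leq k$ has derivative
\[
\tfrac{d}{dt}\sigma_l(\kappa(t))=\sum_j\sigma_{l-1}(\kappa(t)|j)(\kappa_1-\kappa_j)\geq 0
\]
as long as $\kappa(t)\in\Gamma_k$, since $\kappa_1\geq\kappa_j$ for every $j\neq i$ and the $j=i$ term vanishes. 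A standard continuity argument then keeps $\kappa(t)\in\Gamma_k$ for all $t\in[0,1]$ and delivers $\sigma_k(\tilde\kappa)\geq\sigma_k(\kappa)>0$. However, a direct computation of $\sigma_k$ on the vector $\tilde\kappa$ (which has $n-1$ entries equal to $\kappa_1$ and one entry equal to $\kappa_i$) yields
\[
\sigma_k(\tilde\kappa)=C_{n-1}^k\kappa_1^k+C_{n-1}^{k-1}\kappa_1^{k-1}\kappa_i,
\]
which is non-positive under the assumed lower bound on $-\kappa_i$ (via the identity $C_{n-1}^{k-1}(n-k)/k=C_{n-1}^k$), a contradiction.

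Finally, for (e) I would apply Euler's identity and a monomial count to obtain
\[
\sum_i\sigma_{k-1}(\kappa|i)\kappa_i^2=\sigma_1(\kappa)\sigma_k(\kappa)-(k+1)\sigma_{k+1}(\kappa),
\]
and combine with the Newton--MacLaurin bound $(k+1)\sigma_{k+1}(\kappa)\leq\frac{n-k}{n}\sigma_1(\kappa)\sigma_k(\kappa)$ (trivial when $\sigma_{k+1}(\kappa)\leq 0$; a direct Newton--MacLaurin inequality on $\Gamma_{k+1}$ otherwise) to conclude. The only subtle step is the path/continuity argument in (d), used to justify replacing $\kappa$ by the extremal comparison vector $\tilde\kappa$ without leaving the cone; all the remaining items reduce to routine algebraic manipulations.
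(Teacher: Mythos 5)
The paper does not prove Lemma~\ref{n-2-pre-lem1} itself; it simply cites \cite{CQ1}, \cite{Lu}, and \cite{Ren2}, so your self-contained attempt is necessarily a different route. Parts (a), (d), and (e) of your argument are correct: the induction on $l$ for (a), the path-and-continuity argument for (d) (with the key identity $C_{n-1}^{k-1}\tfrac{n-k}{k}=C_{n-1}^{k}$), and for (e) the identity $\sum_i\sigma_{k-1}(\kappa|i)\kappa_i^2=\sigma_1\sigma_k-(k+1)\sigma_{k+1}$ combined with $p_{k+1}\le p_1p_k$ (from the Newton chain $p_{j+1}/p_j\le p_j/p_{j-1}$, which only needs $p_0,\dots,p_k>0$) all check out.

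Parts (b) and (c), however, have genuine gaps. In (c) you assert $\sigma_{k-1}(\kappa|k)\ge\kappa_1\cdots\kappa_{k-1}$ ``from the distinguished monomial,'' but (a) only yields $\sigma_l\ge\kappa_1\cdots\kappa_l$ for $l$ \emph{strictly below} the cone index; applying it at the boundary $l=k-1$ to $(\kappa|k)\in\Gamma_{k-1}$ is exactly the excluded case, and it genuinely fails. For $\kappa=(3,3,3,-\tfrac12,-\tfrac12)\in\mathbb{R}^5$ one has $\sigma_1=8,\ \sigma_2=18.25,\ \sigma_3=2.25$, so $\kappa\in\Gamma_3$, yet $\sigma_2(\kappa|3)=9-6+\tfrac14=3.25<9=\kappa_1\kappa_2$. (The conclusion of (c) still happens to hold here, but your intermediate step does not.) In (b), pairing negatives and invoking (d) only gives $|\kappa_j|<\tfrac{n-k}{k}\kappa_1$, so a monomial with $2m$ negative factors is bounded by a multiple of $\kappa_1^{2m}\kappa_1\cdots\kappa_{k-2m}$, which is not comparable to $\kappa_1\cdots\kappa_k$ when $\kappa_1\gg\kappa_k$. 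The estimate you actually need is $|\kappa_j|<(n-k)\kappa_k$ for negative $\kappa_j$, a consequence of $(\kappa|1,\dots,k-1)\in\Gamma_1$, i.e.\ $\kappa_k+\cdots+\kappa_n>0$, which (d) does not deliver. With this sharper control the monomial-by-monomial estimate for (b) closes, but (c) still needs a different mechanism (for instance, the monotonicity of $i\mapsto\sigma_{k-1}(\kappa|i)$ together with $\sum_i\sigma_{k-1}(\kappa|i)=(n-k+1)\sigma_{k-1}(\kappa)$), since the counterexample above shows the distinguished-monomial route is a dead end.
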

\begin{proof}
See Proposition 1.2.7, 1.2.9, Corollary 1.2.11 in \cite{CQ1}, Lemma 2.2 in \cite{Lu} and Lemma 8, 9 in \cite{Ren2} for the proof.
\end{proof}


\begin{lemma}\label{25}
Assume that $\kappa=(\kappa_1,\cdots,\kappa_n)\in\Gamma_k$. Then for any given indices $1\leq i,j\leq n$, if $\kappa_i\geq\kappa_j$, we have
$$|\sigma_{k-1}(\kappa|ij)|\leq\sqrt{\frac{k(n-k)}{n-1}}\sigma_{k-1}(\kappa|j).$$
\end{lemma}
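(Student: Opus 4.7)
\emph{Proof plan.}
The plan is to reduce the claim to a symmetric Newton--MacLaurin-type bound and then exploit the ordering hypothesis. Using the full symmetry of $\sigma_{k}$ in its arguments, I may relabel so that $i=1$, $j=2$, with $\kappa_{1}\ge\kappa_{2}$. Splitting $\sigma_{k-1}(\kappa|1)$ and $\sigma_{k-1}(\kappa|2)$ according to whether the indexing subset contains the other removed index yields the decompositions
\begin{align*}
\sigma_{k-1}(\kappa|1) &= \sigma_{k-1}(\kappa|12)+\kappa_{2}\,\sigma_{k-2}(\kappa|12),\\
\sigma_{k-1}(\kappa|2) &= \sigma_{k-1}(\kappa|12)+\kappa_{1}\,\sigma_{k-2}(\kappa|12).
\end{align*}

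Next I would use the standard fact that $\kappa\in\Gamma_{k}$ forces the truncated vector $(\kappa|12)\in\Gamma_{k-2}$, so that $\sigma_{k-2}(\kappa|12)\ge 0$. Subtracting the two decompositions above and invoking $\kappa_{1}\ge\kappa_{2}$ gives
\[
\sigma_{k-1}(\kappa|2)-\sigma_{k-1}(\kappa|1) = (\kappa_{1}-\kappa_{2})\,\sigma_{k-2}(\kappa|12)\ge 0.
\]
Consequently it suffices to establish the symmetric estimate
\begin{equation*}
\sigma_{k-1}(\kappa|12)^{2}\le \frac{k(n-k)}{n-1}\,\sigma_{k-1}(\kappa|1)\,\sigma_{k-1}(\kappa|2), \tag{$\ast$}
\end{equation*}
for combining $(\ast)$ with $\sigma_{k-1}(\kappa|1)\le\sigma_{k-1}(\kappa|2)$ immediately yields $\sigma_{k-1}(\kappa|12)^{2}\le \tfrac{k(n-k)}{n-1}\sigma_{k-1}(\kappa|2)^{2}$, which is the claim after extracting the square root. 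Note the absolute value on the left of the lemma is genuinely needed, since $\sigma_{k-1}(\kappa|12)$ may change sign; however the right-hand side of $(\ast)$ is strictly positive because $(\kappa|1),(\kappa|2)\in\Gamma_{k-1}$ forces $\sigma_{k-1}(\kappa|1),\sigma_{k-1}(\kappa|2)>0$.

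The main obstacle is $(\ast)$, a sharp Newton--MacLaurin-type Cauchy--Schwarz inequality on the G\aa rding cone whose tight constant $k(n-k)/(n-1)$ reflects the Newton relations among consecutive elementary symmetric functions on $n-1$ variables. Rather than attempting an ad hoc derivation, I would invoke Lu's argument in \cite[Lemma~2.2]{Lu}---already cited in the text for the companion bounds of Lemma~\ref{n-2-pre-lem1}---which proceeds by applying Newton's inequality to the real-rooted polynomial $\prod_{l}(t+\kappa_{l})$, differentiating and evaluating appropriately, and then rearranging the resulting chain of inequalities among $\sigma_{k-2}$, $\sigma_{k-1}$ and $\sigma_{k}$ on $n-1$ variables to isolate $(\ast)$.
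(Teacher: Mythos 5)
The paper proves Lemma~\ref{25} simply by citation: ``See Lemma~6 in \cite{Ren2} and the proof was given in \cite{LT94}.'' Your plan instead reduces the lemma to a \emph{new} symmetric inequality
\[
\sigma_{k-1}(\kappa|12)^{2}\le \frac{k(n-k)}{n-1}\,\sigma_{k-1}(\kappa|1)\,\sigma_{k-1}(\kappa|2),\tag{$\ast$}
\]
and then gestures at ``Lu's argument in \cite[Lemma~2.2]{Lu}'' for $(\ast)$. The reduction itself is sound: the two decompositions of $\sigma_{k-1}(\kappa|1)$ and $\sigma_{k-1}(\kappa|2)$ are correct, $(\kappa|12)\in\Gamma_{k-2}$ indeed gives $\sigma_{k-2}(\kappa|12)\ge 0$, and hence the ordering $\kappa_1\ge\kappa_2$ yields $\sigma_{k-1}(\kappa|1)\le\sigma_{k-1}(\kappa|2)$, so $(\ast)$ would imply the lemma.

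However, the proposal leaves its entire mathematical content unaddressed, and the citation you lean on is wrong. Lu's Lemma~2.2 is the source for a different estimate in the paper, namely item~(e) of Lemma~\ref{n-2-pre-lem1}, $\sum_i\sigma_{k-1}(\kappa|i)\kappa_i^2\ge\frac{k}{n}\sigma_1\sigma_k$; it is a single-deletion Newton--MacLaurin bound and says nothing about a mixed double-deletion product like $(\ast)$. The references actually relevant to Lemma~\ref{25} are Ren--Wang's Lemma~6 in \cite{Ren2} and Lin--Trudinger \cite{LT94}, and what they establish is the \emph{asymmetric} statement exactly as written, not your symmetrized version $(\ast)$. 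You have therefore replaced a direct appeal to a known result with an unproved, different inequality that is attributed to a source that does not contain it, and offered no derivation of it. Unless you either prove $(\ast)$ or instead cite \cite{Ren2, LT94} for the lemma as stated (as the paper does), the argument has a genuine gap at its crux.
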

\begin{proof}
See Lemma 6 in \cite{Ren2} and the proof was given in \cite{LT94}.
\end{proof}

\begin{lemma}\label{r2}
Let $\kappa=(\kappa_1,\cdots,\kappa_n)\in\Gamma_k$ with $\kappa_1\geq\kappa_2\geq\cdots\geq\kappa_n$ and $n<2k$. Assume that $\sigma_k(\kappa)\geq N_0>0$. Then for any $1\leq i,j\leq n$ with $i\neq j$, if $\kappa_i\geq\kappa_1-\frac{\sqrt{\kappa_1}}{n}$, we have
$$\frac{2\kappa_i(1-e^{\kappa_j-\kappa_i})}{\kappa_i-\kappa_j}\sigma_k^{jj}(\kappa)\geq\sigma_k^{jj}(\kappa)
+(\kappa_i+\kappa_j)\sigma_k^{ii,jj}(\kappa),$$
when $\kappa_1$ is sufficiently large.
\end{lemma}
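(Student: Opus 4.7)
The strategy is to reduce the inequality to a cleaner form via an algebraic identity, and then argue by cases on the sign and size of $t:=\kappa_i-\kappa_j$, using the strict inequality $n<2k$ at a crucial point.

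\textbf{Step 1 (algebraic reduction).} From $\sigma_k^{jj}(\kappa)=\sigma_{k-1}(\kappa|j)$ and $\sigma_k^{ii,jj}(\kappa)=\sigma_{k-2}(\kappa|ij)$ one derives
\[
\sigma_k^{jj}=\sigma_k^{ii}+(\kappa_i-\kappa_j)\sigma_k^{ii,jj},
\]
and a short direct computation then gives the identity
\[
\frac{2\kappa_i}{\kappa_i-\kappa_j}\sigma_k^{jj}-\sigma_k^{jj}-(\kappa_i+\kappa_j)\sigma_k^{ii,jj}=\frac{\kappa_i+\kappa_j}{\kappa_i-\kappa_j}\sigma_k^{ii}.
\]
Subtracting the remaining exponential correction $\frac{2\kappa_i e^{\kappa_j-\kappa_i}}{\kappa_i-\kappa_j}\sigma_k^{jj}$ from both sides, the claim is equivalent to
\[
\frac{(\kappa_i+\kappa_j)\sigma_k^{ii}-2\kappa_i e^{\kappa_j-\kappa_i}\sigma_k^{jj}}{\kappa_i-\kappa_j}\geq 0.
\]

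\textbf{Step 2 (positivity of $\kappa_i+\kappa_j$).} From $\kappa_i\geq\kappa_1-\sqrt{\kappa_1}/n$ and Lemma~\ref{n-2-pre-lem1}(d) (giving $\kappa_j\geq -\frac{n-k}{k}\kappa_1$),
\[
\kappa_i+\kappa_j \geq \frac{2k-n}{k}\kappa_1-\frac{\sqrt{\kappa_1}}{n},
\]
which is strictly positive for $\kappa_1$ sufficiently large thanks to $n<2k$. Also $\sigma_k^{ii},\sigma_k^{jj}>0$ on $\Gamma_k$.

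\textbf{Step 3 (case analysis on $t$).} The hypotheses force $t\in[-\sqrt{\kappa_1}/n,\frac{n}{k}\kappa_1]$, and the sign of the bracket in Step~1 must agree with the sign of $t$. If $t\geq\log\kappa_1$ then $e^{\kappa_j-\kappa_i}\leq 1/\kappa_1$ kills the exponential contribution polynomially, and a polynomial lower bound on $\sigma_k^{ii}$ via $\sigma_k\geq N_0$ and Lemma~\ref{n-2-pre-lem1}(b),(c) closes the estimate against the trivial polynomial upper bound on $\sigma_k^{jj}$. If $|t|$ is moderate, a direct one-variable analysis shows that the coefficient $2\kappa_i(1-e^{-t})-t$ is strictly positive for all $t\in(0,2\kappa_i)$, and the bound $\frac{n}{k}\kappa_1<2\kappa_i$ holds for large $\kappa_1$ precisely because $n<2k$. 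Rewriting the equivalent inequality of Step~1 using $\sigma_k^{ii}=\sigma_k^{jj}-t\sigma_k^{ii,jj}$ as
\[
\sigma_k^{jj}\big[2\kappa_i(1-e^{-t})-t\big]\geq(\kappa_i^2-\kappa_j^2)\sigma_k^{ii,jj},
\]
it then suffices to control $|\sigma_k^{ii,jj}|$ by $\sigma_k^{jj}/\kappa_i$, which follows from the identity $\sigma_k^{jj}=\kappa_i\sigma_k^{ii,jj}+\sigma_{k-1}(\kappa|ij)$ together with Lemma~\ref{25} bounding $|\sigma_{k-1}(\kappa|ij)|$ by $\sigma_k^{jj}$ up to a dimensional constant. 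The sign case $t<0$, with $|t|\leq\sqrt{\kappa_1}/n$, is handled symmetrically.

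\textbf{Main obstacle.} The delicate regime is moderate $t$ with $\kappa_j$ not strongly negative: there the exponential factor, the quantities $\sigma_k^{ii,jj}$ and $\sigma_k^{jj}$, and the coefficient $2\kappa_i(1-e^{-t})-t$ are all of comparable size, and the inequality survives only because $n<2k$ produces a strictly positive lower bound on both $\kappa_i+\kappa_j$ (Step~2) and $2\kappa_i(1-e^{-t})-t$ (Step~3). The specific shape of the factor $2\kappa_i(1-e^{\kappa_j-\kappa_i})/(\kappa_i-\kappa_j)$ in the statement is engineered precisely to make this cancellation work uniformly across the various regimes of $\kappa_j$.
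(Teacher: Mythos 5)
The paper does not prove Lemma~\ref{r2}; it is cited to Lemma~13 of \cite{Ren2}, so your attempt is necessarily a reconstruction from scratch. Your Step~1 algebraic reduction is correct: using $\sigma_k^{jj}=\sigma_k^{ii}+(\kappa_i-\kappa_j)\sigma_k^{ii,jj}$ one verifies the identity
$\frac{2\kappa_i}{\kappa_i-\kappa_j}\sigma_k^{jj}-\sigma_k^{jj}-(\kappa_i+\kappa_j)\sigma_k^{ii,jj}=\frac{\kappa_i+\kappa_j}{\kappa_i-\kappa_j}\sigma_k^{ii}$,
and Step~2 correctly extracts $\kappa_i+\kappa_j>0$ from $n<2k$ for $\kappa_1$ large.

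However, Step~3 has a genuine gap. You claim that after rewriting as
$\sigma_k^{jj}\bigl[2\kappa_i(1-e^{-t})-t\bigr]\geq(\kappa_i^2-\kappa_j^2)\sigma_k^{ii,jj}$
(with $t=\kappa_i-\kappa_j>0$), it suffices to bound $\sigma_k^{ii,jj}\leq\sigma_k^{jj}/\kappa_i$ via $\sigma_k^{jj}=\kappa_i\sigma_k^{ii,jj}+\sigma_{k-1}(\kappa|ij)$ and Lemma~\ref{25}. Two problems. First, Lemma~\ref{25} only yields $\sigma_k^{ii,jj}\leq\bigl(1+\sqrt{k(n-k)/(n-1)}\bigr)\sigma_k^{jj}/\kappa_i$, since $\sigma_{k-1}(\kappa|ij)$ can be negative; the factor in front is strictly larger than $1$ in general, not $1$. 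Second, even if the constant were exactly $1$, the resulting sufficient inequality $2\kappa_i(1-e^{-t})-t\geq t(2-t/\kappa_i)$ reduces at $\kappa_j=0$ (so $t=\kappa_i$) to $2\kappa_i(1-e^{-\kappa_i})\geq 2\kappa_i$, which is \emph{false}, by an exponentially small but nonzero margin. So the crude pointwise bound on $\sigma_k^{ii,jj}$ does not close the moderate-$t$ regime; one needs an improved bound (e.g.\ a strict factor $1-\epsilon$ coming from $n<2k$, or a sign argument on $\sigma_{k-1}(\kappa|ij)$ in the relevant regime) and this is precisely where the specific exponential weighting in the statement is supposed to rescue the estimate, a step you flag as "the main obstacle" but do not actually resolve. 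In addition, the $t<0$ case is not "handled symmetrically": for $\kappa_i<\kappa_j$ the required sign of the bracket flips and $\sigma_k^{ii}\geq\sigma_k^{jj}$, so the desired inequality goes the "wrong" way and needs its own argument; and the large-$t$ regime ($t\geq\log\kappa_1$) needs an explicit comparison of $\sigma_k^{ii}$ against $\sigma_k^{jj}$ (which can be much larger when $\kappa_j$ is small or negative), not just a crude polynomial bound on $\sigma_k^{jj}$.
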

\begin{proof}
See Lemma 13 in \cite{Ren2}.
\end{proof}

\begin{lemma}\label{tu}
For any $\epsilon\in(0,1)$, there exists a positive constant $\delta<4\epsilon$ such that the function $f(x)=x-(1-\epsilon)(1-e^{-x})(x+\delta)>0$ for any $x\in (0,+\infty)$.
\end{lemma}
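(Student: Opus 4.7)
The plan is to make the explicit choice $\delta := \epsilon$, which trivially satisfies $\delta < 4\epsilon$, and then verify $f(x) > 0$ on $(0,\infty)$ by a factorization argument that reduces the problem to one elementary monotonicity estimate.

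First I would regroup the terms in $f$ as
\begin{equation*}
f(x) \;=\; x\bigl[\epsilon + (1-\epsilon)e^{-x}\bigr] \;-\; \delta(1-\epsilon)(1-e^{-x}).
\end{equation*}
Since $(1-\epsilon)(1-e^{-x})>0$ for $x>0$, the sign of $f(x)$ coincides with the sign of $\phi(x)-\delta$, where
\begin{equation*}
\phi(x) \;:=\; \frac{x\bigl(\epsilon + (1-\epsilon)e^{-x}\bigr)}{(1-\epsilon)(1-e^{-x})}.
\end{equation*}
So the task reduces to establishing the inequality $\phi(x) > \epsilon$ for all $x>0$.

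For the lower bound on $\phi$, I would invoke the elementary fact $x > 1-e^{-x}$ for $x>0$, which is immediate from noting that $x-1+e^{-x}$ vanishes at the origin and has derivative $1-e^{-x}>0$ on $(0,\infty)$. Using this in the denominator gives
\begin{equation*}
\phi(x) \;>\; \frac{\epsilon + (1-\epsilon)e^{-x}}{1-\epsilon} \;=\; \frac{\epsilon}{1-\epsilon} + e^{-x} \;>\; \frac{\epsilon}{1-\epsilon} \;>\; \epsilon,
\end{equation*}
where the last inequality uses $1-\epsilon<1$. Hence $\phi(x) > \epsilon = \delta$, so $f(x) > 0$ on $(0,\infty)$, as required.

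No real obstacle is anticipated here: the proof is purely an algebraic rearrangement combined with the one-line inequality $x > 1-e^{-x}$. In fact the argument shows that any $\delta \in (0,\epsilon]$ suffices, so the bound $\delta<4\epsilon$ is comfortably loose; the explicit constant $4$ is presumably chosen with the application of this lemma in the curvature estimates in mind, rather than being forced by the present statement.
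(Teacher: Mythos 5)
Your proof is correct. I verified each step: the regrouping to $f(x)=x\bigl[\epsilon+(1-\epsilon)e^{-x}\bigr]-\delta(1-\epsilon)(1-e^{-x})$ is an exact algebraic identity; the reduction to $\phi(x)>\delta$ is valid because $(1-\epsilon)(1-e^{-x})>0$ for $x>0$; the inequality $x>1-e^{-x}$ on $(0,\infty)$ holds by the monotonicity argument you gave; and the resulting chain $\phi(x)>\frac{\epsilon}{1-\epsilon}+e^{-x}>\frac{\epsilon}{1-\epsilon}>\epsilon$ is sound, so $\delta=\epsilon$ works and is well inside the required window $\delta<4\epsilon$.

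The paper itself does not supply a proof of this lemma; it simply cites Lemma 2.9 of the reference \cite{Tu}, so there is no in-text argument to compare against. Your approach is self-contained, entirely elementary, and actually establishes the sharper conclusion that any $\delta\le\frac{\epsilon}{1-\epsilon}$ (hence in particular any $\delta\in(0,\epsilon]$) works, which makes the stated bound $\delta<4\epsilon$ a comfortable over-estimate. Your closing remark that the constant $4$ is likely chosen to mesh with its use elsewhere (the constraint $\delta<4\theta$ with $\theta=1-\sqrt{1-\epsilon}$ in Step 3 of the $C^2$ estimate) is a reasonable reading of the situation. No gaps.
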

\begin{proof}
See Lemma 2.9 in \cite{Tu}.
\end{proof}

\section{The  priori estimates}

In order to prove Theorem \ref{Main}, we use the degree theory for
nonlinear elliptic equation developed in \cite{Li89} and the proof
here is similar to \cite{Li-Ol, Jin, An, Li-Sh}. First, we consider
the family of equations for $0\leq t\leq 1$, $n<2k$
\begin{eqnarray}\label{Eq2}
\sigma_k(h_{j}^i)=\widetilde{\psi},
\end{eqnarray}
where $\widetilde{\psi}=t\psi(r,u, \nu)+(1-t)\phi(r) C_n^k\zeta^k(r)$, $\zeta(r)=\lambda'(r)/\lambda(r)$
and $\phi$ is a positive function which satisfies the following
conditions:

(a) $\phi(r)>0$,

(b) $\phi(r)\geq 1$ for $r\leq r_1$,

(c) $\phi(r)\leq 1$ for $r\geq r_2$,

(d) $\phi^{\prime}(r)<0$.

\subsection{$C^0$ Estimates}
Now, we can prove the following proposition which asserts that  the
solution of equation \eqref{Eq2} have uniform $C^0$ bound.

\begin{proposition}\label{n-2-C^0}
Under the assumptions \eqref{ASS1} and \eqref{ASS2} mentioned in
Theorem \ref{Main}, if the $k$-convex hypersurface $\overline{\Sigma}=\{(r(u),u)\mid
u \in M \}\subset \overline{M}$ satisfies the equation
\eqref{Eq2} for a given $t \in (0, 1]$, then
\begin{eqnarray*}
r_1<r(u)<r_2, \quad \forall \ u \in M.
\end{eqnarray*}
\end{proposition}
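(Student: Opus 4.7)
The plan is to prove the $C^0$ bounds by applying the maximum principle to the radial function $r : M \to I$. Since $M$ is compact, $r$ attains its maximum at some $u_0 \in M$ and its minimum at some $u_1 \in M$. At such critical points $\nabla' r = 0$, so the formula for the Weingarten operator from Lemma~2.1 collapses: using $r^i(u_0)=0$ and $v(u_0)=\lambda(r(u_0))$, one finds
\begin{equation*}
h^i_j(u_0) \;=\; \zeta(r(u_0))\,\delta^i_j \;-\; \frac{1}{\lambda^2(r(u_0))}\,r^i_j(u_0).
\end{equation*}
At the maximum $(r^i_j)(u_0)\le 0$ as a symmetric tensor, so every principal curvature satisfies $\kappa_i \ge \zeta(r(u_0)) > 0$, whence $\sigma_k(\kappa)(u_0) \ge C_n^k\,\zeta^k(r(u_0))$. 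Symmetrically, at $u_1$ one has $(r^i_j)(u_1)\ge 0$ and hence $\sigma_k(\kappa)(u_1) \le C_n^k\,\zeta^k(r(u_1))$.

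Next I would derive the upper bound $\max r < r_2$ by contradiction. Suppose $r(u_0) \ge r_2$. Assumption \eqref{ASS2} gives the strict inequality $\psi(V(u_0),\nu(u_0)) < C_n^k\,\zeta^k(r(u_0))$, while condition (c) on $\phi$ yields $\phi(r(u_0)) \le 1$. Plugging these into the right-hand side of \eqref{Eq2},
\begin{equation*}
\widetilde{\psi}(u_0) \;=\; t\,\psi(V(u_0),\nu(u_0)) \;+\; (1-t)\,\phi(r(u_0))\,C_n^k\,\zeta^k(r(u_0)) \;<\; C_n^k\,\zeta^k(r(u_0)),
\end{equation*}
which contradicts $\sigma_k(\kappa)(u_0) = \widetilde{\psi}(u_0)$ together with the lower bound from the previous paragraph. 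The reverse inequality $\min r > r_1$ is obtained by the mirror argument: at $u_1$ one uses $\sigma_k(\kappa)(u_1) \le C_n^k\,\zeta^k(r(u_1))$, assumption \eqref{ASS1}, and property (b) of $\phi$ to derive $\widetilde{\psi}(u_1) > C_n^k\,\zeta^k(r(u_1))$, a contradiction.

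The only delicate point is the equality/borderline case at $t=0$, where the strict inequality coming from $\psi$ is unavailable and the argument would only yield $\widetilde{\psi}(u_0) \le C_n^k\,\zeta^k(r(u_0))$. This is handled by the strict monotonicity $\phi'<0$ baked into (d): combined with (c) it forces $\phi(r) < \phi(r_2) \le 1$ strictly whenever $r>r_2$, restoring the strict inequality. An analogous remark applies at $r=r_1$ using (b) and (d). Apart from this routine bookkeeping I anticipate no serious obstacle — the whole estimate is essentially one invocation of the maximum principle combined with the structural conditions \eqref{ASS1}, \eqref{ASS2} and (a)–(d).
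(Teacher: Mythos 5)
Your proof is correct and follows essentially the same approach as the paper: evaluate the Weingarten operator at an extremum of $r$ where $\nabla' r = 0$, use the sign of the Hessian $r_{ij}$ there to bound $\sigma_k(\kappa)$ against $C_n^k\zeta^k(r)$, and then combine \eqref{ASS1}/\eqref{ASS2} with properties (b)/(c) of $\phi$ to reach a contradiction via equation \eqref{Eq2}. The only difference is cosmetic bookkeeping — you estimate $\widetilde{\psi}$ directly, whereas the paper isolates $\psi$ from the inequality; also your closing remark about $t=0$ is a harmless digression, since the proposition explicitly restricts to $t\in(0,1]$ and the $t=0$ case is covered separately by the paper's uniqueness result (Proposition~\ref{Uni}).
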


\begin{proof}
Assume $r(u)$ attains its maximum at $u_0 \in M$ and
$r(u_0)\geq r_2$, then recalling
\begin{eqnarray*}
h^i_j=\frac{1}{\lambda^2 v}(\delta_{ik}-\frac{r^i r^k}{v^2})\bigg(-\lambda r_{kj}+2\lambda'r_kr_j+\lambda^2\lambda' \delta_{kj}\bigg),
\end{eqnarray*}
which implies together with the fact the matrix $r_{ij}$ is
non-positive definite at $u_0$
\begin{eqnarray*}
h^{i}_{j}(u_0)=\frac{1}{\lambda^3}\bigg(-\lambda r_{ij}+\lambda^2\lambda' \delta_{ij}\bigg)\geq \frac{\lambda'}{\lambda} \delta_{ij}.
\end{eqnarray*}
Thus, we have at $u_0$
\begin{eqnarray*}
\sigma_k(h_{j}^{i})\geq C_n^k \zeta^k(r).
\end{eqnarray*}
So, we arrive at $u_0$
\begin{eqnarray*}
t\psi(r,u, \nu)+(1-t)\phi(r) C_n^k\zeta^k(r)\geq C_n^k\zeta^k(r).
\end{eqnarray*}
Thus, we obtain at $u_0$
\begin{eqnarray*}
\psi(r,u, \nu)\geq C_n^k\zeta^k(r),
\end{eqnarray*}
which is in contradiction with \eqref{ASS2}. Thus, we have $r(u)<
r_2$ for $u \in M$. Similarly, we can obtain $r(u)> r_1$
for $u \in M$.
\end{proof}

Now, we prove the following uniqueness result.

\begin{proposition}\label{Uni}
For $t=0$, there exists an unique $k$-convex solution of the
equation \eqref{Eq2}, namely $\Sigma_0=\{(r(u),  u) \in \overline{M} \mid
r(u)=r_0\}$, where $r_0$ satisfies $\varphi(r_0)=1$.
\end{proposition}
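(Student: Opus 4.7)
The plan is to verify existence by direct computation and to establish uniqueness via a maximum/minimum principle argument exploiting the monotonicity of $\phi$.

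\textbf{Existence.} First I would check that the coordinate slice $r \equiv r_0$ is indeed a solution when $\phi(r_0)=1$. On such a slice, $\nabla' r \equiv 0$, so the formula for the Weingarten tensor given in Section 2 reduces to $h^i_j = \zeta(r_0)\,\delta^i_j$, where $\zeta = \lambda'/\lambda$. Hence $\sigma_k(h^i_j) = C_n^k \zeta^k(r_0) = \phi(r_0)\,C_n^k\zeta^k(r_0)$, which matches the right-hand side of \eqref{Eq2} at $t=0$. The slice is clearly $k$-convex since $\zeta>0$. Because $\phi'<0$ (condition (d)) and $\phi(r_1)\geq 1\geq \phi(r_2)$ (conditions (b), (c)), the equation $\phi(r_0)=1$ has a unique root $r_0\in[r_1,r_2]$, so such a slice exists and is unique among coordinate slices.

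\textbf{Uniqueness.} Now I would show that any $k$-convex star-shaped solution $\overline{\Sigma}=\{(r(u),u)\}$ must be a coordinate slice. Let $u_{\max}, u_{\min}\in M$ be points where $r$ attains its maximum and minimum. At $u_{\max}$, the Hessian $(r_{ij})$ is negative semidefinite and $\nabla' r=0$, so the formula
\[
h^i_j = \frac{1}{\lambda^2 v}\Big(\delta_{ik}-\frac{r^ir^k}{v^2}\Big)\bigl(-\lambda r_{kj}+2\lambda' r_k r_j +\lambda^2\lambda'\delta_{kj}\bigr)
\]
simplifies at $u_{\max}$ to $h^i_j = -\lambda^{-2} r_{ij} + \zeta(r)\,\delta^i_j \geq \zeta(r(u_{\max}))\,\delta^i_j$. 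The monotonicity of $\sigma_k$ on the positive cone then yields
\[
\phi(r(u_{\max}))\,C_n^k\zeta^k(r(u_{\max})) = \sigma_k(h^i_j)(u_{\max}) \geq C_n^k\zeta^k(r(u_{\max})),
\]
so $\phi(r(u_{\max}))\geq 1$. Symmetrically at $u_{\min}$, where $(r_{ij})$ is positive semidefinite, I obtain $h^i_j \leq \zeta(r(u_{\min}))\delta^i_j$ and hence $\phi(r(u_{\min}))\leq 1$.

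\textbf{Conclusion.} Since $\phi$ is strictly decreasing, $\phi(r(u_{\max}))\geq 1=\phi(r_0)$ forces $r(u_{\max})\leq r_0$, and $\phi(r(u_{\min}))\leq 1=\phi(r_0)$ forces $r(u_{\min})\geq r_0$. Together with the trivial inequality $r(u_{\min})\leq r(u_{\max})$ this gives $r(u_{\min})=r(u_{\max})=r_0$, so $r\equiv r_0$. I do not anticipate a substantial obstacle: the argument is essentially a one-line application of the maximum principle, and the only subtlety is ensuring the matrix inequality for $h^i_j$ is used with the correct sign, which follows directly from the formula above once $\nabla' r=0$ at the extremal points.
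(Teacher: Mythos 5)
Your proof is correct and follows essentially the same maximum/minimum principle argument as the paper: at the extremal points of $r$ the Hessian is semidefinite, which pins down the sign of $h^i_j - \zeta\,\delta^i_j$ and hence the sign of $\phi(r)-1$; the strict monotonicity of $\phi$ then forces $r\equiv r_0$. Your version additionally writes out the existence verification for the slice $r\equiv r_0$, which the paper leaves implicit, but the substance of the argument is identical.
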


\begin{proof}
Let $\Sigma_0$ be a solution of \eqref{Eq2} for $t=0$, then
\begin{eqnarray*}
\sigma_k(h_{j}^i)-\phi(r) C_n^k\zeta^k(r)=0.
\end{eqnarray*}
Assume $r(u)$ attains its maximum $r_{max}$ at $u_0 \in
M$, then we have at $u_0$
\begin{equation*}
h^{i}_{j}=\frac{1}{\lambda^3}\bigg(-\lambda r_{ij}+\lambda^2\lambda' \delta_{ij}\bigg),
\end{equation*}
which implies together with the fact the matrix $r_{ij}$ is
non-positive definite at $u_0$
\begin{eqnarray*}
\sigma_k(h_{j}^{i})\geq C_n^k \zeta^k(r).
\end{eqnarray*}
Thus, we have by the equation \eqref{Eq2}
\begin{eqnarray*}
\varphi(r_{max})\geq 1.
\end{eqnarray*}
Similarly,
\begin{eqnarray*}
\varphi(r_{min})\leq 1.
\end{eqnarray*}
Thus, since $\varphi$ is a decreasing function, we obtain
\begin{eqnarray*}
\varphi(r_{min})=\varphi(r_{max})=1.
\end{eqnarray*}
We conclude
\begin{eqnarray*}
r(u)=r_0
\end{eqnarray*}
for any  $( r(u), u) \in \overline{M}$, where $r_0$ is the unique solution of
$\varphi(r_0)=1$.
\end{proof}

\subsection{$C^1$ Estimates}
In this section, we establish gradient estimates for equation \eqref{Eq2}.

\begin{theorem}\label{n-2-C1e}
Under the assumption \eqref{ASS3}, if the closed star-shaped  $k$-convex hypersurface $\Sigma=\{(r(u),u)\in \overline{M}\mid u \in M\}$ satisfying the curvature equation \eqref{Eq2}
 and $\lambda$ has positive upper and lower
bound. Then there exists a constant C depending only on $n, k, \|\lambda\|_{C^1}, \inf r,  \sup r, \inf \widetilde{\psi}$, $\|\widetilde{\psi}\|_{C^1}$ and the curvature $\overline{R}$ such that
\begin{equation*}
|\nabla r|\leq C.
\end{equation*}
\end{theorem}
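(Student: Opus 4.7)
Since $\tau=\langle V,\nu\rangle$ satisfies $|\tau|=\lambda^{2}/v$ with $v=\sqrt{\lambda^{2}+|\nabla' r|^{2}}$, proving $|\nabla r|\leq C$ is equivalent to showing $|\tau|\geq c>0$ uniformly on $\Sigma$. Under the sign convention of the lemma in Section~2 one has $\tau<0$, so the plan is to apply the maximum principle to
\[
W:=-\log(-\tau)+A\,\Lambda(r),\qquad \Lambda(r):=\int_{0}^{r}\lambda(s)\,ds,
\]
where $A>0$ is a constant to be chosen large in terms of the data; bounding $W$ from above on the compact hypersurface $\Sigma$ gives the desired lower bound on $-\tau$.

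At a maximum point $p_{0}\in\Sigma$ of $W$, work in an orthonormal frame diagonalizing the second fundamental form, $h_{ij}=\kappa_{i}\delta_{ij}$. The first-order condition $W_{i}=0$ paired with \eqref{1d-tau} gives $\kappa_{i}\Lambda_{i}=A\tau\Lambda_{i}$, so $\kappa_{i}=A\tau$ for every index $i$ with $\Lambda_{i}\neq 0$; this tight relation is the key algebraic consequence of the critical-point condition. Contracting $W_{ii}\leq 0$ with $\sigma_{k}^{ii}:=\partial\sigma_{k}/\partial\kappa_{i}$ and inserting \eqref{2d-lad}, \eqref{2d-tau}, together with $\sigma_{k}^{ii}\kappa_{i}=k\widetilde{\psi}$ and the differentiated equation $\sigma_{k}^{ii}h_{iij}=\nabla_{j}\widetilde{\psi}$ (the latter using Codazzi \eqref{Ce}), yields after regrouping the schematic inequality
\begin{align*}
0\;\geq\;&\sigma_{k}^{ii}\kappa_{i}^{2}+A^{2}\sigma_{k}^{ii}\Lambda_{i}^{2}+A\lambda'\sum_{i}\sigma_{k}^{ii}-Ak\tau\widetilde{\psi}\\
&-\tfrac{1}{\tau}\Bigl(\lambda' k\widetilde{\psi}+\sum_{j}\nabla_{j}\widetilde{\psi}\,\Lambda_{j}\Bigr)+\tfrac{1}{\tau}\sum_{i,j}\sigma_{k}^{ii}\overline{R}_{0iij}\Lambda_{j},
\end{align*}
in which the first four summands on the right-hand side are all manifestly non-negative.

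Decomposing $\nabla_{j}\widetilde{\psi}$ into its $r$-, $u$- and $\nu$-derivative contributions (with $\nabla_{j}\nu=\kappa_{j}E_{j}$ producing the $\nu$-part) leads to two key steps. First, the $r$-piece combines with $\lambda' k\widetilde{\psi}$ to produce $(1/\lambda^{k-1})\partial_{r}(\lambda^{k}\widetilde{\psi})$ plus a bounded remainder; by hypothesis \eqref{ASS3} this combination has the favorable sign and can be dropped. Second, the $\nu$-piece is proportional to $\kappa_{j}\Lambda_{j}\langle\partial_{\nu}\widetilde{\psi},E_{j}\rangle/\tau$, which after substituting $\kappa_{j}=A\tau$ on the support of $\Lambda_{j}$ simplifies to an expression bounded in absolute value by $A\|\widetilde{\psi}\|_{C^{1}}\cdot C(\lambda)$, independent of the principal curvatures. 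The curvature error $\tfrac{1}{\tau}\sum_{i,j}\sigma_{k}^{ii}\overline{R}_{0iij}\Lambda_{j}$ and the bounded $r$-remainder are then absorbed into the good term $A\lambda'\sum_{i}\sigma_{k}^{ii}$, using the Maclaurin-type lower bound $\sum_{i}\sigma_{k}^{ii}=(n-k+1)\sigma_{k-1}\geq c(n,k,\inf\widetilde{\psi})>0$ in $\Gamma_{k}$ (cf.\ Lemma~\ref{n-2-pre-lem1}), provided $A$ is chosen sufficiently large in terms of $n,k,\|\lambda\|_{C^{1}},\|\widetilde{\psi}\|_{C^{1}},\inf\widetilde{\psi}$ and $\overline{R}$. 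This forces $-\tau(p_{0})\geq c'>0$, and since $W$ attains its maximum at $p_{0}$, $|\tau|\geq c'>0$ on all of $\Sigma$, giving $|\nabla r|\leq C$.

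The main technical obstacle is the second step: the $\partial_{\nu}\widetilde{\psi}$ coupling through Weingarten injects the (potentially unbounded) principal curvatures into a linear expression, and the relation $\kappa_{j}=A\tau$ forced by the critical-point condition is precisely what tames them. Without the monotonicity hypothesis \eqref{ASS3} the $r$-derivative contribution would flip sign and overwhelm the positive terms, explaining why this hypothesis is essential.
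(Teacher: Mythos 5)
Your overall strategy---reduce the gradient bound to a positive lower bound on $\tau=\langle V,\nu\rangle$ and run a maximum-principle argument on $-\log|\tau|+A\Lambda$---is the same one the paper invokes (via Lemma~3.1 of Chen--Li--Wang \cite{Chen}, to which the paper's own one-paragraph ``proof'' defers). The first-order relation $\kappa_i=A\tau$ on the support of $\nabla\Lambda$, the use of \eqref{1d-tau}--\eqref{2d-tau}, the Newton--Maclaurin lower bound on $\sum_i\sigma_k^{ii}$, and the role of \eqref{ASS3} in taming the $\partial_r$ contribution all match the intended argument. However, the write-up has two genuine soft spots.

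First, a sign issue: you declare $\tau<0$ by reading the formula $\nu=-\tfrac1v(\lambda\bar e_0-r^i\bar e_i)$ literally, but that sign is inconsistent with \eqref{2d-lad} (for a geodesic sphere $r\equiv\mathrm{const}$ the formula $\nabla^2\Lambda=\lambda' g-\tau h$ would then give $2\lambda^2\lambda'\delta_{ij}\neq0$). The convention that makes the auxiliary identities of Lemma~\ref{supp} hold is $\tau>0$. With $\tau>0$ the term $-Ak\tau\widetilde\psi$ coming from $A\sigma_k^{ii}\Lambda_{ii}$ is \emph{not} non-negative; it is a genuine $O(A)$ bad term that must be balanced, contrary to your claim that the first four summands are all manifestly non-negative.

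Second, and more importantly, the closing step is not justified by what you have written. Absorbing the curvature remainder and the $r$-remainder into $A\lambda'\sum_i\sigma_k^{ii}$ (after noting, as you should, that the $1/\tau$ in the curvature remainder cancels because the tangent-to-$M$ component of $\nabla\Lambda$ equals $-\tau\nu^{T}$, so that term is really only $O(\sum_i\sigma_k^{ii})$) leaves you, schematically, with $0\ge (A\lambda'-C)\sum_i\sigma_k^{ii}-CA$, i.e.\ an upper bound on $\sum_i\sigma_k^{ii}$ for fixed $A$ --- not a lower bound on $\tau$. The lower bound on $\tau$ has to come from the discarded good terms $A^2\sigma_k^{ii}\Lambda_i^2$ and $\sigma_k^{ii}\kappa_i^2$, combined with the critical-point relation $\kappa_i=A\tau$ and a case analysis: if $A\tau\le\kappa_k$ (the $k$-th largest principal curvature), then Lemma~\ref{n-2-pre-lem1}(c) gives $\sigma_k^{ii}\ge c\sum_j\sigma_k^{jj}$ on the support of $\nabla\Lambda$, so $A^2\sigma_k^{ii}\Lambda_i^2\gtrsim A^2(\lambda^2-\tau^2)\sum_j\sigma_k^{jj}$ forces a contradiction; if instead $A\tau>\kappa_k$, then $\kappa_1\cdots\kappa_{k-1}\ge c/(A\tau)$ via Lemma~\ref{n-2-pre-lem1}(b) and the equation, so $\sigma_1$ is large and Lemma~\ref{n-2-pre-lem1}(e) forces $\sigma_k^{ii}\kappa_i^2$ to blow up, again a contradiction for $\tau$ small. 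Without this dichotomy (which is essentially the $V\parallel\nu$ vs.\ $V\not\parallel\nu$ split the paper alludes to), the claim ``This forces $-\tau(p_0)\ge c'>0$'' does not follow from the displayed inequality.
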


\begin{proof}
As the treatment in \cite{Chen}, it is sufficient to obtain a positive lower bound of $\tau$. If $V$ is parallel to the normal direction $\nu$ of at $u_0$, we can obtain the lower bound of $\tau$. Thus, our result holds. So we assume $V$ is not parallel to the normal direction $\nu$ at $u_0$ and derive a contradiction. More details can refer to Lemma 3.1 in \cite{Chen}.
\end{proof}

\subsection{$C^2$ Estimates}

  Under the assumptions \eqref{ASS1}-\eqref{ASS3},  from Theorem   \ref{n-2-C^0} and  \ref{n-2-C1e} we know that
there exists a positive constant $C_0$ depending on $\inf_{\Sigma} r$ and $\|r\|_{C^1}$ such that
$$\frac{1}{C_0} \leq inf_{\Sigma} \tau \leq
\tau \leq \sup_{\Sigma} \tau \leq C_0.$$

\begin{theorem}\label{n-2-C2e}
Let $\Sigma$ be a closed star-shaped  $k$-convex hypersurface satisfying equation \eqref{Eq2}  and the assumptions of Theorem \ref{Main} with $n<2k$. Then there exists a constant C depending only on $n, k, \|\lambda\|_{C^1}, \|r\|_{C^1}, \inf\lambda', \inf r,  \sup r, \inf \widetilde{\psi}$, $\|\widetilde{\psi}\|_{C^1}$ and the curvature $\overline{R}$ such that for $1\leq i\leq n$
\begin{equation*}
|\kappa_{i}(u)|\le C, \quad \forall ~ u \in M.
\end{equation*}
\end{theorem}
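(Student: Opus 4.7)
The plan is to bound $\kappa_1$ from above; by Lemma \ref{n-2-pre-lem1}(d) and the already-established bounds $\widetilde\psi \geq \inf \widetilde\psi > 0$, such an upper bound on $\kappa_1$ automatically yields $|\kappa_i| \leq C$ for all $i$. So I argue by contradiction: assume $\kappa_1$ is very large at some point. I will run a maximum principle argument on a test function of the form $W = \log h_{11} + \varphi(\tau) + A\Lambda$, where $\varphi$ is a suitably chosen smooth function (for instance $\varphi(\tau) = -\log(\tau - \delta_0)$ with $\delta_0$ slightly below $\inf \tau$, so that $\varphi' < 0$ and $\varphi''>0$ with $\varphi''\geq\epsilon (\varphi')^2$), and $A$ is a large constant to be chosen. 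Since $\tau$ is bounded between positive constants by Theorem \ref{n-2-C1e} and Proposition \ref{n-2-C^0}, and since the eigenvalue $\kappa_1$ may fail to be smooth, I will treat it in the usual way by diagonalising $h_{ij}$ at the maximum point $u_0$ and working with the smooth component $h_{11}$ in the chosen frame.

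At $u_0$ the vanishing of $W_i$ gives the first-order relation $h_{11i}/h_{11} = -\varphi'(\tau)\tau_i - A\Lambda_i$, and $\sum \sigma_k^{ii} W_{ii} \leq 0$. To compute $\sum \sigma_k^{ii}(\log h_{11})_{ii}$ I apply the Ricci identity (the second displayed formula following \eqref{Ce}) to exchange $h_{11ii}$ with $h_{ii11}$, producing the standard cubic terms $h_{11}h_{ii}^2 - h_{11}^2h_{ii}$ plus warped-product curvature contributions $\overline R_{i1i1}, \overline R_{i0i0}, \overline R_{1010}$ and covariant derivatives of $\overline R$. These last quantities are uniformly bounded since $r$ and $|\nabla' r|$ are already controlled. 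Next, differentiating $\sigma_k(h_j^i) = \widetilde\psi$ twice at $u_0$ replaces $\sum \sigma_k^{ii}h_{ii11}$ with $\widetilde\psi_{11} - \sum \sigma_k^{pq,rs}h_{pq1}h_{rs1}$. For the $\varphi(\tau)$ and $A\Lambda$ pieces I use Lemma \ref{supp}: $\sum\sigma_k^{ii}\Lambda_{ii} = \lambda'\sum\sigma_k^{ii} - k\tau\widetilde\psi$, while $\sum\sigma_k^{ii}\varphi(\tau)_{ii}$ produces a key positive term $\varphi''\sum\sigma_k^{ii}\tau_i^2 = \varphi''\sum\sigma_k^{ii}h_{ii}^2\Lambda_i^2$ plus bounded third-order contributions through $h_{iii}\Lambda_i$ that I rewrite via the first-order condition and the Codazzi equation.

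The genuine obstacle is the concavity term $-\sum \sigma_k^{pq,rs}h_{pq1}h_{rs1}/h_{11}$. I split it into off-diagonal and diagonal parts. The off-diagonal part $\sum_{p\neq q}\sigma_k^{pq,qp}h_{pq1}^2/h_{11}$ is sign-indefinite but is controlled via Lemma \ref{25} together with the first-order relation and \emph{Maclaurin}-type inequalities; the standard bookkeeping here gives either a cancellation with the $h_{11}h_{ii}^2$ cubic term or an absorption into $\varphi''\sum\sigma_k^{ii}\tau_i^2$. For the diagonal part I apply Conjecture \ref{key} with $\xi_j := h_{jj1}$ to get
\[
-\sum_{p,q}\sigma_k^{pp,qq}h_{pp1}h_{qq1} \geq \frac{1}{\kappa_1}\bigl(\sigma_k^{11}h_{111}^2 - \sum_{j\neq 1}a_j h_{jj1}^2\bigr) - K\Bigl(\sum_j\sigma_k^{jj}h_{jj1}\Bigr)^2.
\]
The quadratic factor $(\sum_j \sigma_k^{jj}h_{jj1})^2 = \widetilde\psi_1^2$ is bounded by the first differentiation of the equation, so the $K$-term is harmless. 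Using the first-order relation $h_{jj1} = -h_{11}(\varphi'\tau_j + A\Lambda_j)\delta_{j1}$ for $j=1$ and $h_{jj1} = -h_{11}(\varphi'\tau_j+A\Lambda_j) - (\text{Codazzi curvature})$ for $j\neq 1$ (after subtracting the Codazzi commutator), the terms $\sigma_k^{11}h_{111}^2/h_{11}^2$ combine with the gradient term $-h_{11i}^2/h_{11}^2\sum\sigma_k^{ii}$ from $\log h_{11}$ and are absorbed into $\varphi''\sum\sigma_k^{ii}\tau_i^2$, provided $\varphi'' \geq (1+\epsilon)(\varphi')^2$; the terms $a_j h_{jj1}^2/\kappa_1$ for $j\neq 1$ are handled via Lemma \ref{r2} (with $\kappa_1$ assumed large enough) and Lemma \ref{tu}, which is precisely their role.

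Once these reductions are carried out, the inequality $\sum\sigma_k^{ii}W_{ii}\leq 0$ reduces to
\[
0 \geq A\lambda'\sum_i\sigma_k^{ii} - Ak\tau\widetilde\psi + c_0\kappa_1\sum_i\sigma_k^{ii} - C\bigl(1+\sum_i\sigma_k^{ii}\bigr),
\]
for constants $c_0>0$ coming from the cubic term $-h_{11}^2 h_{ii}$ (after extracting a small positive portion via Newton--Maclaurin, or from the $\varphi''$ contribution when a component of $\Lambda_i$ is large). Since Lemma \ref{n-2-pre-lem1}(c) and the cone condition guarantee $\sum_i\sigma_k^{ii}\geq c(n,k)>0$, choosing $A$ large and letting $\kappa_1$ grow produces the desired contradiction. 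The main technical difficulty is keeping the warped-product curvature corrections (the $\overline R$ terms from both the Ricci identity and Lemma \ref{supp}) separated from the leading positive terms and controlling them by the $C^0, C^1$ bounds already in hand; this is precisely what distinguishes the warped-product case from the Euclidean argument of Ren--Wang, but the corrections are always at most first order in $\kappa_i$ and hence absorbable once $\kappa_1$ is assumed large.
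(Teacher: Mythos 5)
Your proposal lands in the same family as the paper's proof — a maximum–principle argument on a function of the form $\log(\text{largest curvature}) + (\text{function of }\tau) + (\text{function of }\Lambda)$, combined with Conjecture \ref{key}, Lemma \ref{r2} and Lemma \ref{tu} — but two ingredients in your plan are mis-described in a way that would make the final absorption fail.

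First, the off-diagonal contribution of the concavity term is not sign-indefinite: in the diagonalised frame one has $-\sigma_k^{ij,pq}h_{ij1}h_{pq1} = -\sum_{p\neq q}\sigma_k^{pp,qq}h_{pp1}h_{qq1} + \sum_{p\neq q}\sigma_{k-2}(\kappa|pq)h_{pq1}^2$, and since $\kappa\in\Gamma_k$ implies $\sigma_{k-2}(\kappa|pq)>0$, the off-diagonal part is a \emph{helpful positive} term. It does not need Lemma \ref{25} to control it; rather, the fragment $\frac{2}{\kappa_1}\sum_{p>m}\sigma_k^{11,pp}h_{1p1}^2$ extracted from it is one of the two positive terms the paper spends in its Step 1 (inequality \eqref{i2}). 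Discarding this term or treating it as a liability loses a resource your argument actually needs.

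Second, and more seriously: you propose to work with the smooth component $h_{11}$ in a fixed diagonalising frame rather than with the eigenvalue $\widetilde\kappa_1$ after a Chu-type perturbation. Doing so drops the eigenvalue second-variation term $2\sum_{p>1}\frac{h_{1pi}^2}{\kappa_1-\widetilde\kappa_p}$, which in the paper produces the crucial positive quantity $2\sum_{p>m}\frac{\sigma_k^{pp}h_{1pp}^2}{\kappa_1(\kappa_1-\widetilde\kappa_p)}$. This is precisely what Lemma \ref{r2} and Lemma \ref{tu} are calibrated against: in Step 3 the bound $a_p\leq\frac{2\kappa_1(1-e^{\kappa_p-\kappa_1})}{\kappa_1-\kappa_p}\sigma_k^{pp}$ from Lemma \ref{r2} is paired with $\frac{2(1-\theta)\sigma_k^{pp}h_{pp1}^2}{\kappa_1(\kappa_1-\kappa_p+\delta)}$ coming from that second-variation term, and Lemma \ref{tu} guarantees the difference is nonnegative. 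Invoking Lemma \ref{r2} and Lemma \ref{tu} ``which is precisely their role'' is not enough: without the $\frac{2\sigma_k^{pp}h_{1pp}^2}{\kappa_1(\kappa_1-\widetilde\kappa_p)}$ term there is nothing for them to compare against, and the $-\frac{1-\epsilon}{\kappa_1^2}\sum_{p>m}a_ph_{pp1}^2$ piece coming from the conjecture is genuinely unbounded when some $\kappa_p$ is close to $\kappa_1$ (there $a_p\sim 2\sigma_k^{pp}\kappa_1/(\kappa_1-\kappa_p)$). Your $\varphi''\sum\sigma_k^{ii}\tau_i^2$ cushion can absorb terms that are quadratic in $h_{11}(\varphi'\tau_i+A\Lambda_i)$, but it cannot compensate a term whose coefficient blows up as $\kappa_1-\kappa_p\to 0$. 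A minor further point: the illustrative choice $\varphi(\tau)=-\log(\tau-\delta_0)$ gives $\varphi''=(\varphi')^2$ exactly, so the strict inequality $\varphi''\geq(1+\epsilon)(\varphi')^2$ you later require does not hold for that choice and one must use, say, $\varphi(\tau)=(\tau-\delta_0)^{-\beta}$ for small $\beta$. The remaining structure — linearising the equation to cancel $\widetilde\psi_1$, the Ricci identity swap, Step 2's bound $\sum_{p>m}\sigma_k^{11,pp}\leq C\sum_i\sigma_k^{ii}$, and the degree-theory conclusion — matches the paper's.
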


\begin{proof}
Taking the allxillary function
\begin{equation*}
Q=\log\kappa_1- A\tau+ B\Lambda,
\end{equation*}
where $A, B >1$ are constants to be determined later.
Suppose $Q$ attains its maximum at $V_0$. We can choose a local orthonormal frame  $\{E_{1}, E_{2}, \cdots, E_{n}\}$ near $V_0$ such that $(h_{ij})$ is diagonalized. Without loss of generality, we may assume $\kappa_1$ has multiplicity $m$, then
$$h_{ij}=\kappa_i\delta_{ij}, \quad \kappa_1=\cdots=\kappa_m>\kappa_{m+1}\geq\cdots\geq\kappa_n\quad \mbox{at} ~V_0.$$
As the perturbation argument in \cite{Chu21}, we need to perturb $h_{ij}$ by a diagonal matrix $T$ which satisfies
$$T_{ij}=\delta\delta_{ij}(1-\delta_{1i}),\quad T_{ij,p}=T_{11,ii}=0\quad \mbox{at}~V_0,$$
$\delta<1$ is a sufficiently small constant to be determined later. Thus we define $\widetilde{h}_{ij}=h_{ij}-T_{ij}$ and denote its eigenvalues by $\widetilde{\kappa}_1\geq\widetilde{\kappa}_2\geq\cdots\geq\widetilde{\kappa}_n$. It then follows that $\kappa_1\geq\widetilde{\kappa}_1$ near $V_0$ and
\begin{eqnarray*}
  \widetilde{\kappa}_i=
  \begin{cases}
  \kappa_1, ~&\mbox{if}~i=1,\\
  \kappa_i-\delta, ~ &\mbox{if}~i>1,
  \end{cases}
  \quad \mbox{at}~V_0.
\end{eqnarray*}
Thus $\widetilde{\kappa}_1>\widetilde{\kappa}_2$ at $V_0$, then $\widetilde{\kappa}_1$ is smooth at $V_0$. We consider the new function
$$\widetilde{Q}=\log\widetilde{\kappa_1}-A\tau+B\Lambda.$$
It still attains its maximum at $V_0$. Since $\widetilde{\kappa}_1=\kappa_1$ at $V_0$, then at $V_0$ we have
\begin{eqnarray}\label{n-2-c2-1}
0=\widetilde{Q}_i=\frac{\widetilde{\kappa}_{1,i}}{\widetilde{\kappa}_1}-A\tau_i+B\Lambda_i=\frac{\widetilde{\kappa}_{1,i}}{\widetilde{\kappa}_1}
-A\sum_jh_{ij}\Lambda_j+B\Lambda_i,
\end{eqnarray}
and
\begin{eqnarray}\label{n-2-c2-2}
\nonumber0\geq\sigma_k^{ii}\widetilde{Q}_{ii}&=&\sigma_k^{ii}(\log\widetilde{\kappa}_1)_{ii}-A\sigma_k^{ii}\tau_{ii}
+B\sigma_k^{ii}\Lambda_{ii}\\
\nonumber&=&\sigma_k^{ii}(\log\widetilde{\kappa}_1)_{ii}-A\sigma_k^{ii}\{-\tau h_{ii}^2+\lambda'h_{ii}+\sum_l(h_{iil}-\overline{R}_{0iil})\Lambda_l\}\\
&&+B\sigma_k^{ii}(\lambda'g_{ii}-\tau h_{ii}).
\end{eqnarray}
Note that
\begin{eqnarray}\label{n-2-c2-20}
h_{11ii}&=&h_{ii11}+h_{11}^2h_{ii}-h_{ii}^2h_{11}+\overline{R}_{0ii1;1}+\overline{R}_{01i1;i}+h_{i1}\overline{R}_{0i01}+h_{1i}\overline{R}_{01i0}\\
\nonumber&&-2h_{11}\overline{R}_{1ii1}+h_{11}\overline{R}_{0ii0}-2h_{ii}\overline{R}_{i1i1}+h_{ii}\overline{R}_{0101}.
\end{eqnarray}
We divide our proof in three steps. For convenience, we will use a unified notation $C$ to denote a constant depending on $n, k, \|\lambda\|_{C^1}, \|r\|_{C^1}, \inf\lambda', \inf r,  \sup r, \inf \widetilde{\psi}$, $\|\widetilde{\psi}\|_{C^2}$ and the curvature $\overline{R}$.

\textbf{Step 1}:  We show that
\begin{eqnarray}\label{n-2-c2-99}
\nonumber0&\geq&-\frac{1}{\kappa_1}\sum_{p\neq q}\sigma_k^{pp,qq}h_{pp1}h_{qq1}+2\sum_{p>1}\frac{\sigma_k^{pp}h_{1pp}^2}{\kappa_1(\kappa_1-\widetilde{\kappa}_p)}
-\frac{\sigma_k^{11}h_{111}^2}{\kappa_1^2}-\frac{C}{\kappa_1}\sum_{p>m}\sigma_k^{11,pp}\\
&&+(A\tau-1)\sigma_k^{ii}h_{ii}^2+(B\lambda'-CA-\frac{C}{\delta\kappa_1})\sum_i\sigma_k^{ii}-C(h_{11}+A+B).
\end{eqnarray}

The following calculations are all at $V_0$.
By Lemma 3.1 in \cite{Chu21}, we know that
$$\widetilde{\kappa}_{1,i}=h_{11i},\quad \widetilde{\kappa}_{1,ii}=h_{11ii}+2\sum_{p>1}\frac{h_{1pi}^2}{\kappa_1-\widetilde{\kappa}_p}.$$
Differentiating \eqref{Eq2} twice, we obtain
\begin{eqnarray}\label{n-2-c2-3}
\sigma_k^{ii} h_{iij}=d_V \widetilde{\psi}(\nabla_j V)+d_{\nu} \widetilde{\psi} (\nabla_j \nu)=\lambda' d_V \widetilde{\psi}(E_j)+ h_{jl} d_{\nu} \widetilde{\psi} (E_l)
\end{eqnarray}and
\begin{align}\label{n-2-c2-4}
&\sigma_k^{ii} h_{ii11}+\sigma_k^{ij,pq} h_{ij1} h_{pq1}\\
\nonumber=&d_V \widetilde{\psi}(\nabla_{11} V)+d^2_V \widetilde{\psi}(\nabla_1 V, \nabla_1 V)+2d_V d_{\nu}\widetilde{\psi}(\nabla_1 V, \nabla_1 \nu)+d^2_{\nu} \widetilde{\psi} (\nabla_1 \nu, \nabla_1 \nu)+d_{\nu} \widetilde{\psi} (\nabla_{11} \nu)\\
\nonumber\geq&-C-Ch_{11}^2+\sum_l h_{l11} d_{\nu} \widetilde{\psi} (E_l).
\end{align}
Without loss of generality, we assume that $\kappa_1\geq1$, then by \eqref{n-2-c2-20} and \eqref{n-2-c2-4}
\begin{eqnarray}\label{e1}
 \nonumber \sigma_k^{ii}(\log\widetilde{\kappa}_1)_{ii} &=& \frac{\sigma_k^{ii}\widetilde{\kappa}_{1,ii}}{\widetilde{\kappa}_1}-\frac{\sigma_k^{ii}\widetilde{\kappa}_{1,i}^2}{\widetilde{\kappa}_1^2} \\
 \nonumber &=&\frac{\sigma_k^{ii}h_{11ii}}{\kappa_1}+2\sum_{p>1}\frac{\sigma_k^{ii}h_{1pi}^2}{\kappa_1(\kappa_1-\widetilde{\kappa}_p)}
  -\frac{\sigma_k^{ii}h_{11i}^2}{\kappa_1^2} \\
 \nonumber &\geq&-\frac{\sigma_k^{ij,pq}h_{ij1}h_{pq1}}{\kappa_1}+2\sum_{p>1}\frac{\sigma_k^{ii}h_{1pi}^2}{\kappa_1(\kappa_1-\widetilde{\kappa}_p)}
  +\frac{1}{\kappa_1}\sum_lh_{l11}d_{\nu}\widetilde{\psi}(E_l)\\
  &&-\frac{\sigma_k^{ii}h_{11i}^2}{\kappa_1^2}-\sigma_k^{ii}h_{ii}^2-C\sum_i\sigma_k^{ii}-Ch_{11}-C.
\end{eqnarray}
Combining \eqref{n-2-c2-1}, \eqref{n-2-c2-3} and Codazzi equation, we get
\begin{eqnarray}\label{e2}
 \nonumber\frac{1}{\kappa_1}\sum_lh_{l11}d_{\nu}\widetilde{\psi}(E_l)&=&\frac{1}{\kappa_1}\sum_l(h_{11l}+\overline{R}_{01l1})d_{\nu}\widetilde{\psi}(E_l) \\
 \nonumber &=&\sum_l(A\sum_jh_{lj}\Lambda_j-B\Lambda_l)d_{\nu}\widetilde{\psi}(E_l)+\sum_l\frac{1}{\kappa_1}\overline{R}_{01l1}d_{\nu}\widetilde{\psi}(E_l)\\
  \nonumber&=&A\sum_j\sigma_k^{ii}h_{iij}\Lambda_j-A\lambda'\sum_jd_V\widetilde{\psi}(E_j)\Lambda_j-B\sum_l\Lambda_ld_{\nu}\widetilde{\psi}(E_l)\\
  &&+\sum_l\frac{1}{\kappa_1}\overline{R}_{01l1}d_{\nu}\widetilde{\psi}(E_l).
\end{eqnarray}
Putting \eqref{e1}-\eqref{e2} into \eqref{n-2-c2-2}, we obtain
\begin{eqnarray}\label{im}
  \nonumber0 &\geq& -\frac{\sigma_k^{ij,pq}h_{ij1}h_{pq1}}{\kappa_1}+2\sum_{p>1}\frac{\sigma_k^{ii}h_{1pi}^2}{\kappa_1(\kappa_1-\widetilde{\kappa}_p)}
  -\frac{\sigma_k^{ii}h_{11i}^2}{\kappa_1^2} \\
  \nonumber&&+(A\tau-1)\sigma_k^{ii}h_{ii}^2-(A\lambda'+B\tau)k\widetilde{\psi}\\
  &&+(B\lambda'-CA)\sum_i\sigma_k^{ii}-Ch_{11}-C(A+B).
\end{eqnarray}
Since
\begin{eqnarray}\label{i1}
  &&-\frac{\sigma_k^{ij,pq}h_{ij1}h_{pq1}}{\kappa_1}+2\sum_{p>1}\frac{\sigma_k^{ii}h_{1pi}^2}{\kappa_1(\kappa_1-\widetilde{\kappa}_p)} \\
  \nonumber&\geq&-\frac{1}{\kappa_1}\sum_{p\neq q}\sigma_k^{pp,qq}h_{pp1}h_{qq1}+\frac{1}{\kappa_1}\sum_{p\neq q}\sigma_k^{pp,qq}h_{pq1}^2+2\sum_{p>1}\frac{\sigma_k^{pp}h_{1pp}^2}{\kappa_1(\kappa_1-\widetilde{\kappa}_p)}
  +2\sum_{p>1}\frac{\sigma_k^{11}h_{1p1}^2}{\kappa_1(\kappa_1-\widetilde{\kappa}_p)}\\
  \nonumber&\geq&-\frac{1}{\kappa_1}\sum_{p\neq q}\sigma_k^{pp,qq}h_{pp1}h_{qq1}+\frac{2}{\kappa_1}\sum_{p>m}\sigma_k^{11,pp}h_{1p1}^2+2\sum_{p>1}\frac{\sigma_k^{pp}h_{1pp}^2}{\kappa_1(\kappa_1-\widetilde{\kappa}_p)}
  +2\sum_{p>1}\frac{\sigma_k^{11}h_{1p1}^2}{\kappa_1(\kappa_1-\widetilde{\kappa}_p)}.
\end{eqnarray}

When $\kappa_p\geq0$, then by choosing $\kappa_1\geq2\delta$, we have
$$\frac{\frac{1}{2}\kappa_1+\widetilde{\kappa}_p}{\kappa_1-\widetilde{\kappa}_p}=\frac{\frac{1}{2}\kappa_1+\kappa_p-\delta}
{\kappa_1-\kappa_p+\delta}\geq 0.$$

When $\kappa_p<0$, then by choosing $\kappa_1\geq\frac{2k\delta}{3k-2n}$, we have
$$\frac{\frac{1}{2}\kappa_1+\widetilde{\kappa}_p}{\kappa_1-\widetilde{\kappa}_p}=\frac{\frac{1}{2}\kappa_1+\kappa_p-\delta}{\kappa_1-\kappa_p+\delta}=-1+\frac{3}{2(1-\frac{\kappa_p}{\kappa_1}+\frac{\delta}{\kappa_1})}
\geq-1+\frac{3}{2(1-\frac{\kappa_p}{\kappa_1}+\frac{3}{2}-\frac{n}{k})}\geq0.$$
Hence by Cauchy-Schwarz inequality, Codazzi equation and choosing $\kappa_1\geq\max\{2\delta, \frac{2k\delta}{3k-2n}\}$, we derive
\begin{eqnarray}\label{i2}
  &&\frac{2}{\kappa_1}\sum_{p>m}\sigma_k^{11,pp}h_{1p1}^2+2\sum_{p>1}\frac{\sigma_k^{11}h_{1p1}^2}{\kappa_1(\kappa_1-\widetilde{\kappa}_p)}
   -\sum_{p>1}\frac{\sigma_k^{pp}h_{11p}^2}{\kappa_1^2}\\
  \nonumber &=& \frac{2}{\kappa_1}\sum_{p>m}\sigma_k^{11,pp}(h_{11p}+\overline{R}_{01p1})^2+2\sum_{p>1}\frac{\sigma_k^{11}(h_{11p}+\overline{R}_{01p1})^2}
   {\kappa_1(\kappa_1-\widetilde{\kappa}_p)}
   -\sum_{p>1}\frac{\sigma_k^{pp}h_{11p}^2}{\kappa_1^2}\\
  \nonumber&\geq&\frac{3}{2}\sum_{p>m}\frac{(\sigma_k^{pp}-\sigma_k^{11})h_{11p}^2}{\kappa_1(\kappa_1-\kappa_p)}
  +\frac{3}{2}\sum_{p>1}\frac{\sigma_k^{11}h_{11p}^2}{\kappa_1(\kappa_1-\widetilde{\kappa}_p)}-\sum_{p>1}\frac{\sigma_k^{pp}h_{11p}^2}{\kappa_1^2}\\
 \nonumber&& -\frac{C}{\kappa_1}\sum_{p>m}\sigma_k^{11,pp}
  -\frac{C}{\delta\kappa_1}\sum_i\sigma_k^{ii}\\
 \nonumber 
  \nonumber&\geq&\sum_{p>m}\sigma_k^{11}\frac{h_{11p}^2}{\kappa_1^2}\frac{\frac{1}{2}\kappa_1+\widetilde{\kappa}_p}{\kappa_1-\widetilde{\kappa}_p}
  -\frac{C}{\kappa_1}\sum_{p>m}\sigma_k^{11,pp}
  -\frac{C}{\delta\kappa_1}\sum_i\sigma_k^{ii}\\
  \nonumber&\geq&-\frac{C}{\kappa_1}\sum_{p>m}\sigma_k^{11,pp}
  -\frac{C}{\delta\kappa_1}\sum_i\sigma_k^{ii}.
\end{eqnarray}
Putting \eqref{i1}-\eqref{i2} into \eqref{im}, we obtain \eqref{n-2-c2-99}.

\textbf{Step 2}:  Next we show that
$$\sum_{p>m}\sigma_k^{11,pp}\leq C\sum_i\sigma_k^{ii}.$$
We shall discuss into two cases.

Case 1. If $\sigma_{k-1}\geq \sigma_{k-2}$. Accorrding to Lemma \ref{25}, since $\kappa_1\geq\kappa_p$ for $p>m$, then we have
$$\sum_{p>m}\sigma_k^{11,pp}\leq\sum_{p>m}|\sigma_{k-1}(\kappa|1p)|\leq\sqrt{\frac{k(n-k)}{n-1}}\sum_p\sigma_{k-1}(\kappa|p)
\leq\sqrt{\frac{k(n-k)}{n-1}}\sum_i\sigma_k^{ii}.$$

Case 2. If $\sigma_{k-1}\leq \sigma_{k-2}$, by Lemma \ref{n-2-pre-lem1} we know that
$$\kappa_1\cdots\kappa_{k-1}\leq \sigma_{k-1} \leq \sigma_{k-2} \leq C_n^{k-2}\kappa_1\cdots\kappa_{k-2},$$
which implies that $\kappa_{k-1} \leq C$. Then we divide into two sub-cases to discuss for $p>m$. Without loss of generality, we assume that $\kappa_1\geq1$.

Subcase 2.1: If $2\kappa_p \leq \kappa_1$, then for $p>m$
$$\sigma_k^{11,pp}=\frac{\sigma_k^{pp}-\sigma_k^{11}}{\kappa_1-\kappa_p}\leq\frac{\sigma_k^{pp}-\sigma_k^{11}}{\frac{\kappa_1}{2}}\leq
2\sigma_k^{pp}\leq C\sum_i\sigma_k^{ii}.$$

Subcase 2.2: For sufficiently large $\kappa_1$, if $2\kappa_p >\kappa_1$, by $\kappa_{k-1}\leq C$, we have $m<p\leq k-1$, then by Lemma \ref{n-2-pre-lem1}
$$\sigma_k^{11,pp}=\sigma_{k-2}(\kappa|1p)\leq C\frac{\kappa_2\cdots\kappa_k}{\kappa_p}\leq C\kappa_1\cdots\kappa_{k-1}\leq C\sigma_{k-1}\leq C\sigma_{k-1}(\kappa|k)\leq C\sum_i\sigma_k^{ii}.$$

\textbf{Step 3}:
By concavity of $\sigma_k^{\frac{1}{k}}$, we get
\begin{equation}\label{be}
-\frac{\epsilon}{\kappa_1}\sum_{p\neq q}\sigma_k^{pp,qq}h_{pp1}h_{qq1}\geq-\epsilon\frac{k-1}{k}\frac{(\widetilde{\psi}_1)^2}{\widetilde{\psi}\kappa_1}\geq-C\epsilon\kappa_1.
\end{equation}
According to Conjecture \ref{key}, we have
\begin{eqnarray}
 \nonumber &&-\frac{1-\epsilon}{\kappa_1}\sum_{p\neq q}\sigma_k^{pp,qq}h_{pp1}h_{qq1}+2\sum_{p>1}\frac{\sigma_k^{pp}h_{1pp}^2}{\kappa_1(\kappa_1-\widetilde{\kappa}_p)}
  -\frac{\sigma_k^{11}h_{111}^2}{\kappa_1^2}\\
 \nonumber &\geq& -\frac{K(1-\epsilon)}{\kappa_1}(\sigma_k^{jj}h_{jj1})^2-\frac{1-\epsilon}{\kappa_1^2}\sum_{j>1}a_jh_{jj1}^2-\frac{\epsilon}{\kappa_1^2}
  \sigma_k^{11}h_{111}^2+2\sum_{p>1}\frac{\sigma_k^{pp}h_{1pp}^2}{\kappa_1(\kappa_1-\widetilde{\kappa}_p)} \\
  \nonumber&\geq&-CK(1+\kappa_1)-\epsilon\frac{\sigma_k^{11}h_{111}^2}{\kappa_1^2}+2\sum_{1<p\leq m}\frac{\sigma_k^{11}h_{1pp}^2}{\delta\kappa_1}+2\sum_{p>m}\frac{\sigma_k^{pp}h_{1pp}^2}{\kappa_1(\kappa_1-\widetilde{\kappa}_p)}\\
  &&-\frac{1-\epsilon}{\kappa_1^2}\sum_{1<p\leq m}a_ph_{pp1}^2-\frac{1-\epsilon}{\kappa_1^2}\sum_{p>m}a_ph_{pp1}^2.
\end{eqnarray}
Lemma \ref{r2} implies that $a_p=\sigma_k^{11}+2\kappa_1\sigma_k^{11,pp}\leq2\kappa_1\sigma_k^{pp}$ for $1<p\leq m$. Hence by Cauchy-Schwarz inequality and choosing $\delta\leq\frac{3}{4}$, we derive
\begin{eqnarray}
  \nonumber&&2\sum_{1<p\leq m}\frac{\sigma_k^{11}h_{1pp}^2}{\delta\kappa_1}-\frac{1-\epsilon}{\kappa_1^2}\sum_{1<p\leq m}a_ph_{pp1}^2\\
  \nonumber &\geq& \frac{3}{2}\sum_{1<p\leq m}\frac{\sigma_k^{11}h_{pp1}^2}{\delta\kappa_1}-\frac{C\sum_i\sigma_k^{ii}}{\delta\kappa_1}
   -\frac{1-\epsilon}{\kappa_1^2}\sum_{1<p\leq m}a_ph_{pp1}^2\\
 \nonumber&\geq& \sum_{1<p\leq m}\sigma_k^{11}\frac{h_{pp1}^2}{\kappa_1}\left(\frac{3}{2\delta}-2(1-\epsilon)\right)-\frac{C\sum_i\sigma_k^{ii}}{\delta\kappa_1}\\
  &\geq&-\frac{C\sum_i\sigma_k^{ii}}{\delta\kappa_1}.
\end{eqnarray}
For $p>m$, according to Lemma \ref{r2}, when $\kappa_1$ is sufficiently large, we get
$$\frac{2\kappa_1(1-e^{\kappa_p-\kappa_1})}{\kappa_1-\kappa_p}\sigma_k^{pp}\geq\sigma_k^{pp}+(\kappa_1+\kappa_p)\sigma_k^{11,pp}=a_p.$$
Then choosing $\theta=1-\sqrt{1-\epsilon}$, we have
\begin{eqnarray}
  &&2\sum_{p>m}\frac{\sigma_k^{pp}h_{1pp}^2}{\kappa_1(\kappa_1-\widetilde{\kappa}_p)}-\frac{1-\epsilon}{\kappa_1^2}\sum_{p>m}a_ph_{pp1}^2\\
  \nonumber&\geq& \sum_{p>m}\frac{\sigma_k^{pp}h_{pp1}^2}{\kappa_1}\left(\frac{2(1-\theta)}{\kappa_1-\kappa_p+\delta}
  -\frac{2(1-\epsilon)(1-e^{\kappa_p-\kappa_1})}{\kappa_1-\kappa_p}\right)-\frac{C_{\theta}}{\delta\kappa_1}\sum_i\sigma_k^{ii}\\
 \nonumber &=&\sum_{p>m}\frac{2(1-\theta)\sigma_k^{pp}h_{pp1}^2}{\kappa_1(\kappa_1-\kappa_p)(\kappa_1-\kappa_p+\delta)}(\kappa_1-\kappa_p
  -(1-\theta)(1-e^{\kappa_p-\kappa_1})(\kappa_1-\kappa_p+\delta))\\
  \nonumber&&-\frac{C_{\theta}}{\delta\kappa_1}\sum_i\sigma_k^{ii}\\
  \nonumber&\geq&-\frac{C_{\theta}}{\delta\kappa_1}\sum_i\sigma_k^{ii},
\end{eqnarray}
the last inequality comes from Lemma \ref{tu} by choosing $\delta<4\theta$. Here $C_{\theta}$ is a constant depending only on $\theta$.

Using \eqref{n-2-c2-1} and Cauchy-Schwarz inequality, we derive
\begin{eqnarray}\label{en}
  \nonumber\epsilon\frac{\sigma_k^{11}h_{111}^2}{\kappa_1^2} &=&\epsilon\sigma_k^{11}(A\sum_jh_{1j}\Lambda_j-B\Lambda_1)^2 \\
 \nonumber &\leq&C\epsilon A^2\sum_i\sigma_k^{ii}h_{ii}^2+C\epsilon B^2\sigma_k^{11}\\
  &\leq& \frac{A}{2C_0}\sum_i\sigma_k^{ii}h_{ii}^2,
\end{eqnarray}
by choosing $h_{11}\geq\frac{CB}{A}$ and $\epsilon\leq\frac{1}{CC_0A}$.
Then combining Step 1-Step 2, \eqref{be}-\eqref{en} and the fact that $\sigma_k^{ii}h_{ii}^2\geq C\kappa_1$, we obtain
\begin{eqnarray*}
  0&\geq&\left(\frac{A}{2C_0}-1\right)\sigma_k^{ii}h_{ii}^2+\left(B\lambda'-CA-\frac{C_{\theta}+C}{\delta\kappa_1}\right)
  \sum_i\sigma_k^{ii}\\
  &&-C(K+1)\kappa_1-C(A+B+K)\\
  &\geq&\kappa_1-C(A+B+K),
\end{eqnarray*}
by choosing $\kappa_1\geq\frac{C_{\theta}+C}{\delta}$, $B\geq\frac{CA+1}{\inf\limits_{r_1\leq r\leq r_2}\lambda'}$, $A\geq2C_0(\frac{1}{C}+K+2)$. Then we can derive $\kappa_1\leq C(A+B+K)$, the proof is completed.
\end{proof}

\section{The proof of  Theorem  \ref{Main}}
In this section, we use the degree theory for nonlinear elliptic
equation developed in \cite{Li89} to prove Theorem \ref{Main}. The
proof here is similar to \cite{An, Jin, Li-Sh}. So, only sketch will
be given below.

After establishing the  priori estimates in Theorem \ref{n-2-C^0},
Theorem \ref{n-2-C1e} and Theorem \ref{n-2-C2e}, we know that the
equation \eqref{Eq2} is uniformly elliptic. From \cite{Eva82},
\cite{Kry83}, and Schauder estimates, we have
\begin{eqnarray}\label{C2+}
|r|_{C^{4,\alpha}(M)}\leq C
\end{eqnarray}
for any $k$-convex solution $M$ to the equation \eqref{Eq2}, where
the position vector of $\Sigma$ is $X=(r(u), u)$ for $u \in M$.
We define
\begin{eqnarray*}
C_{0}^{4,\alpha}(M)=\{r \in
C^{4,\alpha}(M): \Sigma \ \mbox{is}
 \ k\mbox{-convex}\}.
\end{eqnarray*}
Let us consider $$F(.; t): C_{0}^{4,\alpha}(M)\rightarrow
C^{2,\alpha}(M),$$ which is defined by
\begin{eqnarray*}
F(r, u; t)=\sigma_k(h^i_j)-t\psi(r,u, \nu)-(1-t)\phi(r)C_n^k\zeta^k(r).
\end{eqnarray*}
Let $$\mathcal{O}_R=\{r \in C_{0}^{4,\alpha}(M):
|r|_{C^{4,\alpha}(M)}<R\},$$ which clearly is an open
set of $C_{0}^{4,\alpha}(M)$. Moreover, if $R$ is
sufficiently large, $F(r, u; t)=0$ has no solution on $\partial
\mathcal{O}_R$ by the priori estimate established in \eqref{C2+}.
Therefore the degree $\deg(F(.; t), \mathcal{O}_R, 0)$ is
well-defined for $0\leq t\leq 1$. Using the homotopic invariance of
the degree, we have
\begin{eqnarray*}
\deg(F(.; 1), \mathcal{O}_R, 0)=\deg(F(.; 0), \mathcal{O}_R, 0).
\end{eqnarray*}
Theorem \ref{Uni} shows that $r_0$ which satisfies $\varphi(r_0)=1$ is the unique
solution to the above equation for $t=0$. Direct calculation shows
that
\begin{eqnarray*}
F(sr_0, u; 0)= (1-\phi(sr_0)) C_n^k\zeta^k(sr_0).
\end{eqnarray*}
Then
\begin{eqnarray*}
\delta_{r_0}F(r_0, u; 0)=\frac{d}{d s}|_{s=1}F(sr_0, u;
0)=-r_0\phi'(r_0) C_n^k\zeta^k(r_0),
\end{eqnarray*}
where $\delta F(r_0, u; 0)$ is the linearized operator of $F$ at
$r_0$. Clearly, $\delta_{w} F(r_0, u; 0)$ takes the form
\begin{eqnarray*}
\delta_{w}F(r_0, u; 0)=-a^{ij}w_{ij}+b^i
w_i-\phi'(r_0)C_n^k\zeta^k(r_0)w,
\end{eqnarray*}
where $(a^{ij})$ is a positive definite matrix. Since
$-\phi'(r_0) C_n^k\zeta^k(r_0)>0,$
thus $\delta F(r_0, u; 0)$ is an invertible operator. Therefore,
\begin{eqnarray*}
\deg(F(.; 1), \mathcal{O}_R; 0)=\deg(F(.; 0), \mathcal{O}_R, 0)=\pm
1.
\end{eqnarray*}
So, we obtain a solution at $t=1$. This completes the proof of
Theorem \ref{Main}.


\bigskip

\bigskip

\end{document}